\newcommand\cee{\mathfrak{c}}
\theoremstyle{plain}
\newtheorem{theorem}{Theorem}[section]
\newtheorem{proposition}[theorem]{Proposition}
\newtheorem{corollary}[theorem]{Corollary}
\newtheorem{definition}[theorem]{Definition}
\theoremstyle{definition}
\newtheorem{question}{Question}
\newtheorem{claim}{Claim}
\theoremstyle{remark}
\numberwithin{equation}{section}
\DeclareMathOperator{\Fn}{\operatorname{Fn}}
\DeclareMathOperator{\val}{\mathop{val}}
\DeclareMathOperator{\dom}{\mathop{dom}}
\title{On the cardinality of
separable pseudoradial spaces
}
\author[A. Dow]{Alan Dow}
\address{Department of Mathematics,
University of North Carolina at Charlotte, 
Charlotte, NC 28223}
\email{adow@uncc.edu}
\author[I. Juh\'asz]{Istv\'an Juh\'asz}
\address{Alfr\'ed R\'enyi Institute of Mathematics, Hungarian Academy of
Sciences}\email{
 juhasz@renyi.hu}
\thanks{First author supported by  NSF grant DMS-1501506 and\\  Second author
supported by NKFIH grant number  K 129211}
\date{\today}
\keywords{
 tightness, pseudoradial}
\subjclass{ 54A35 }
\begin{document}
\begin{abstract}
The aim of this paper is to consider questions concerning
the possible maximum cardinality of various {\em separable
  pseudoradial} (in short: SP) spaces. 
The most intriguing question here 
is if there is in ZFC a regular (or just Hausdorff) SP space of
cardinality $> \mathfrak{c}$. While this question is left open, we establish
a number of non-trivial results that we list below.

\begin{itemize}

\item It is consistent with $MA\,+\, \mathfrak{c} = \aleph_2$ that
  there is a {\em countably tight} and compact 
SP space of cardinality $2^{\mathfrak{c}}$.

\item If $\kappa$ is a measurable cardinal
 then in the forcing
extension obtained by adding $\kappa$ many Cohen reals, every {\em
  countably tight} 
regular SP space has
cardinality at most $\mathfrak{c}$.

\item If $\kappa > \aleph_1$ Cohen reals are added to a model of GCH then in the extension every 
pseudocompact SP space with a countable dense set of isolated points has
cardinality at most $\mathfrak{c}$.

\item If $\mathfrak{c} \le \aleph_2$ then there is a 0-dimensional SP space 
with a countable dense set of isolated points that has
cardinality greater than $\mathfrak{c}$.
\end{itemize}
\end{abstract}
\maketitle

%\tableofcontents
\bibliographystyle{plain}

\section{Introduction}
The class of pseudoradial spaces is a natural and well-studied
class that is a generalization of the radial property in
the same way that the class of sequential spaces is a generalization
of the class of Frechet-Urysohn spaces.  For a cardinal $\kappa$,
a sequence $\{ x_\alpha : \alpha < \kappa\}$ is said to converge
 to $x$ in a space $X$, if for every neighborhood $U$ of $x$ there is a
  $\beta <\kappa$ such that $\{ x_\alpha : \beta < \alpha < \kappa\}$ is
  a subset of $U$. A set $A$ is radially closed in $X$ if, for every
  cardinal $\kappa\leq |X|$,   no $\kappa$-sequence of points of $A$
  converges to a point not in $A$.  
  A space $X$ is pseudoradial if every radially closed set is closed.
  A set $A$ is sequentially closed if no $\omega$-sequence of points
  from $A$ converges to a point not in $A$ and a space is
  sequential if every sequentially closed set is closed.

  As stated in the abstract, we are exploring the question of
whether separable 
pseudoradial spaces of cardinality greater than $\mathfrak c$ exist.
We may say that a space is large if it has cardinality
greater than $\mathfrak c$. We thank A. Bella for informing
us that it was shown in
\cite{ArhBella93} that it is consistent
that the usual product space $2^{\omega_1}$ 
is SP.

One of the most interesting results about compact pseudoradial spaces
arose when Sapirovskii \cite{SapPseudo}
proved that the continuum hypothesis implied
that a compact space is pseudoradial so long as it is sequentially
compact. A space is sequentially compact if every infinite sequence
has a limit point.
This was improved in \cite{JSpseudo} where it was shown
that it follows from $\mathfrak c\leq\aleph_2$ that compact
sequentially compact spaces are pseudoradial and that this bound
can not be improved in ZFC. In this paper we are able to show that
this
same assumption of $\mathfrak c\leq\aleph_2$ is sufficient to produce
examples of regular Hausdorff 
separable pseudoradial spaces of cardinality equal
to $2^{\mathfrak c}$. In this paper we will restrict our investigation
to regular
Hausdorff spaces and note that $2^{\mathfrak c}$ is the upper bound on
the cardinality of any separable space.

Returning to the class of compact separable spaces, we recall
that every pseudoradial compact space  is sequentially compact.
More generally  in
 a pseudoradial space, a countable discrete set is closed
if it contains no converging sequence, and so clearly,
a compact pseudoradial space is sequentially compact. It is well-known
that the sequential closure of a countable set has cardinality at
most $\mathfrak c$ and therefore every sequential separable space
has cardinality at most $\mathfrak c$.  Balogh \cite{Balogh}
proved that the proper forcing axiom implies that every compact
space of countable tightness is sequential, and therefore
compact separable spaces of countable tightness have cardinality at
most $\mathfrak c$. Of course this paper of Balogh's was in
answer to the celebrated Moore-Mrowka problem.
Recall that the proper forcing axiom also
implies that $\mathfrak c = \aleph_2$ \cites{Boban,Justin}.  
This background motivates one
to ask more about separable pseudoradial spaces of countable tightness
both with and without the extra assumption of compactness.

\bigskip
   
\section{Martin's Axiom}
This project began when we were made aware of a question
in connection to the Moore-Mrowka problem
posed 
by S. Spadaro in MathOverFlow.  We thank K.P. Hart for
bringing the question to our attention. We refer
the reader to \cite{Kunen} for details about Martin's Axiom
and to \cite{Balogh} for the statement of the proper
forcing axiom.

%\url{https://mathoverflow.net/questions/347835/}
%\url{a-variant-of-the-moore-mrowka-problem}

\begin{verse}
  
Assume  $MA_{\aleph_1}$. Is it true that every compact pseudoradial
space of countable tightness is sequential? 

\end{verse}

We answer this question in the negative but will rely on
quoting two related results from the literature. It would be too
ambitious to reproduce the proofs from these publications. However
we can connect the investigations there to the current one.

\begin{definition} A space $X$ is initially $\aleph_1$-compact
  if every open cover of cardinality at most $\aleph_1$
  has  a finite subcover.
\end{definition}

An initially $\aleph_1$-compact space is countably compact
and a first-countable initially $\aleph_1$-compact space
is sequentially compact.
It is shown in \cite{DowSide}*{7.1} that every compactification of
an initially $\aleph_1$-compact space of countable tightness
has countable tightness. It was proven
in \cite{JKL} that 
there is  a model of $\mathfrak c=\aleph_2$ 
in which there exists a separable
first-countable locally compact
initially
$\aleph_1$-compact space that is not compact.
The one-point compactification of this space
is compact, separable, sequentially compact,
countably tight and not sequential. Since
$\mathfrak c\leq\aleph_2$ holds in this model,
this space is also pseudoradial. Finally, it
was shown explicitly in \cite{DowFeng}*{5.11}
(and implicitly in \cite{DowSide}*{6.3}) that
one can perform a further ccc forcing to make
$\mbox{MA}_{\aleph_1}$ and $\mathfrak c=\aleph_2$
 hold while ensuring
that the original space generates a
space in the extension with all the same properties.
This answers Spadaro's question in the negative.

Now we consider our generalization of Spadaro's question
and ask if Martin's Axiom is sufficient to ensure
that compact separable pseudoradial spaces of countable
tightness have cardinality at most $\mathfrak c$. For
this we use the example constructed in
\cite{DowSide}*{5.5,6.3} that had, in answer to a question of
Arhangelskii, an example of an initially $\aleph_1$-compact
first-countable space of cardinality greater than $\mathfrak c$.
We cite the exact statement that we will need.
This was also shown to hold in a model of Martin's Axiom (MA)
and $\mathfrak c=\aleph_2$. Clearly this space was not
compact and it also contained a separable closed subset
that had a compactification of cardinality $2^{\mathfrak c}$.

\begin{proposition}\cite{DowSide}*{5.5,6.3} There is a model of MA
 $+~\mathfrak c   =\aleph_2$ in which there is a separable
first-countable\label{propBig}
initially   $\aleph_1$-compact space $X$
with the property
that $\beta X\setminus X$ has
cardinality $2^{\aleph_2}$ and contains no infinite compact subsets.
\end{proposition}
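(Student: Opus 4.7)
The plan is to construct $X$ inside a ccc iteration of length $\aleph_2$ starting from a model of CH, interleaving bookkeeping for $\mathsf{MA}_{\aleph_1}$ with generic additions of neighborhood filters for the new points of $X$. The underlying set of $X$ will be $\omega \cup \aleph_2$, with $\omega$ playing the role of the countable dense set of isolated points. At each stage $\alpha < \aleph_2$, a single-step ccc forcing contributes a descending sequence of clopen neighborhoods that generate the neighborhood filter at a new point $x_\alpha$ inside the already-constructed part $X_\alpha = \omega \cup \{x_\beta : \beta < \alpha\}$, so first-countability is built in. A standard $\aleph_2$-bookkeeping handles at each stage both a ccc partial order of size at most $\aleph_1$ (for $\mathsf{MA}_{\aleph_1}$) and an $\aleph_1$-sized subset $A \subseteq X_\alpha$ currently lacking a complete accumulation point; the filter at $x_\alpha$ is forced to concentrate on $A$, so that $x_\alpha$ becomes a complete accumulation point of $A$. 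A reflection argument at limit stages then delivers initial $\aleph_1$-compactness in the final model, while separability is automatic from the density of $\omega$.

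To obtain $|\beta X \setminus X| = 2^{\aleph_2}$, I would fix in the ground model an independent family of ultrafilters on $\omega$ and arrange throughout the iteration that none of them is realized as the trace on $\omega$ of the neighborhood filter of any $x_\alpha$. Each surviving ultrafilter (indeed each Boolean combination thereof) extends uniquely to a point of $\beta X \setminus X$, producing $2^{\mathfrak{c}} = 2^{\aleph_2}$ remainder points.

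The principal obstacle, and the deepest part of the construction, is ensuring that $\beta X \setminus X$ contains no infinite compact subset. Any such subset would correspond to a countable family of ultrafilters in the remainder accumulating in $\beta X$ to a further remainder point. The iteration must kill every such configuration: when the bookkeeping catches a potential offending countable family $\{p_n : n < \omega\}$ of ultrafilters on $\omega$ not already converging to some $x_\alpha$, a ccc step is inserted that either separates the $p_n$ by a clopen partition of $\omega$ generic over the current model, or introduces a fresh point $x_\alpha$ absorbing their limit into $X$. The technical heart of the argument, carried out in \cite{DowSide}*{5.5, 6.3}, is to verify that all these single-step forcings are genuinely ccc, that the whole length-$\aleph_2$ iteration preserves at every limit stage first-countability, initial $\aleph_1$-compactness, separability and the abundance of surviving ultrafilters, and that no inadvertent compact subset of the remainder creeps back in during later stages.
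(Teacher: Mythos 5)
The paper offers no proof of this proposition at all: it is quoted verbatim from \cite{DowSide}*{5.5,6.3}, and the authors explicitly say it would be too ambitious to reproduce the argument. So a blind attempt must either be a genuinely self-contained construction or it is nothing. Yours is the latter: at the decisive point you write that ``the technical heart of the argument'' is ``carried out in \cite{DowSide}*{5.5,6.3}'', i.e.\ you appeal to the very theorem being proved. As it stands the proposal is a circular sketch, not a proof, and the outline it does give would not survive being made precise.

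Concretely: (i) the plan to secure ``no infinite compact subsets of $\beta X\setminus X$'' by bookkeeping ``every potential offending countable family $\{p_n:n<\omega\}$ of ultrafilters'' is hopeless on cardinality grounds --- in the final model there are $2^{\mathfrak c}=2^{\aleph_2}$ ultrafilters on $\omega$, and countable families of them (equivalently, candidate compact subsets of the remainder) need not belong to any intermediate extension, so an iteration of length $\aleph_2$ cannot enumerate them; this property has to be forced by a structural feature of $X$ itself, which is exactly the hard content of \cite{DowSide}. (ii) ``Fix in the ground model an independent family of ultrafilters on $\omega$'' and preserve them is incoherent: the ccc iteration adds reals cofinally, so no ground-model ultrafilter on $\omega$ remains an ultrafilter; moreover distinct ultrafilters on $\omega$ can converge to the same point of $\beta X$, so counting remainder points requires something like an independent family of $\aleph_2$ many clopen subsets of $X$ (giving a map of $\beta X$ onto $2^{\aleph_2}$ while $|X|\le\mathfrak c$), not ``surviving'' ultrafilters. (iii) Interleaving arbitrary ccc posets for $\mbox{MA}_{\aleph_1}$ is not free: such steps can destroy initial $\aleph_1$-compactness and the remainder structure already built, and proving that the example persists under the further ccc forcing is itself a nontrivial theorem (this is the role of \cite{DowSide}*{6.3} and, in the related setting, \cite{DowFeng}*{5.11}). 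Since the paper's ``proof'' is the citation, the correct move here is to cite, or else actually carry out the side-conditions construction of \cite{DowSide}; the proposal does neither.
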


\begin{corollary} It is consistent with $\mbox{MA}_{\aleph_1}$
  that there is a compact separable pseudoradial space of countable
  tightness and cardinality greater than $\mathfrak c$.
\end{corollary}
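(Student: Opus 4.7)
The plan is to take $Z := \beta X$, where $X$ is the space of Proposition \ref{propBig}, and to show it is the desired example. Since $\mbox{MA}$ implies $\mbox{MA}_{\aleph_1}$, we work in the cited model. Compactness of $Z$ is immediate, separability follows from that of $X$, and $|Z| \geq |\beta X \setminus X| = 2^{\aleph_2} > \mathfrak{c}$. Countable tightness of $Z$ is \cite{DowSide}*{7.1} applied to the first-countable (hence countably tight) initially $\aleph_1$-compact space $X$. (Alternatively, one could work with the closure in $\beta X$ of the separable closed subspace $Y \subseteq X$ alluded to in the paragraph preceding the proposition; the argument is parallel.)

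The substantive task is to verify pseudoradiality of $Z$. Let $A \subseteq Z$ be radially closed, and suppose toward contradiction that $p \in \overline{A}^Z \setminus A$. Since $A$ is in particular sequentially closed and $X$ is first-countable (hence sequential), $A \cap X$ is closed in $X$. By countable tightness, pick a countable $C \subseteq A$ with $p \in \overline{C}^Z$, and split $C = (C \cap X) \cup (C \setminus X)$. The "no infinite compact subset of $Z \setminus X$" hypothesis, together with Hausdorff-ness and a pigeonhole argument, restricts what $C \setminus X$ can contribute: a nontrivial $\omega$-sequence from $C \setminus X \subseteq Z \setminus X$ converging in $Z$ must have its limit in $X$ (else it produces an infinite compact subset of $Z \setminus X$), and no $\kappa$-sequence with $\kappa > \omega$ in the countable set $C \setminus X$ can converge to a point outside $C \setminus X$. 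Under the natural subcase $p \in \overline{C \cap X}^Z$, if moreover $p \in X$ then $p \in \overline{C \cap X}^X$ and first-countability of $X$ at $p$ yields an $\omega$-sequence in $C \cap X \subseteq A$ converging to $p$, against $p \notin A$; so the hard case is $p \in \overline{C \cap X}^Z \cap (Z \setminus X)$.

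The main obstacle is then to produce a $\kappa$-sequence in $A \cap X$ converging to $p \in Z \setminus X$. The intended strategy is a transfinite construction inside $X$: iterate the sequential closure operator starting from $C \cap X$, which preserves membership in $A \cap X$ by radial closedness of $A$; use initial $\aleph_1$-compactness of $X$ to produce accumulation points of any $\aleph_1$-sized partial approximation; and use the remainder hypothesis to constrain where, in $Z$, the transfinite sequence can accumulate. The delicate point is arranging the bookkeeping so that the final $\kappa$-sequence provably converges to exactly $p$ rather than merely accumulating at it. This requires tracking the neighborhood filter of $p$ in $Z$ carefully throughout the construction, and in doing so one is effectively exploiting the detailed structure of the example built in \cite{DowSide}*{5.5,6.3}.
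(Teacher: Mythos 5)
Your setup (taking $Z=\beta X$ for the $X$ of Proposition \ref{propBig}, and the verification of compactness, separability, cardinality, and countable tightness via \cite{DowSide}*{7.1}) matches the paper. But the heart of the statement is pseudoradiality of $\beta X$, and there your proposal has a genuine gap: the argument stops exactly where the work begins. You reduce to the case $p\in\overline{C\cap X}\cap(Z\setminus X)$ and then only describe an ``intended strategy'' of a transfinite construction, conceding that the delicate point --- arranging that the long sequence converges to $p$ rather than merely accumulates at it --- is unresolved. Nothing in the sketch shows how to do this: points of $\beta X\setminus X$ in general have very large character, and there is no reason a sequence produced by iterating sequential closures inside $X$ should converge to a prescribed point of the remainder. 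In effect you are attempting to reprove, ad hoc and for this particular space, the nontrivial theorem of Juh\'asz and Szentmikl\'ossy \cite{JSpseudo} that $\mathfrak c\le\aleph_2$ implies every compact sequentially compact space is pseudoradial --- and that theorem is precisely what the paper invokes to bypass this difficulty.

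The paper's route is much shorter: since the model satisfies $\mathfrak c=\aleph_2$, it suffices by \cite{JSpseudo} to check that $\beta X$ is sequentially compact. The space $X$ is sequentially compact (first countable and countably compact), and for an infinite countable $D\subseteq\beta X\setminus X$ the hypothesis that the remainder contains no infinite compact subsets forces $\mathrm{cl}_{\beta X}(D)$ to meet $X$, say at $x$; a cozero local base $\{U_n : n\in\omega\}$ at $x$ in $X$ yields the local base $\{\beta X\setminus\mathrm{cl}_{\beta X}(X\setminus U_n) : n\in\omega\}$ at $x$ in $\beta X$, so $x$ has countable character in $\beta X$ and one extracts a subsequence of $D$ converging to $x$. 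This countable-character observation is also missing from your sketch, so even sequential compactness of $\beta X$ is not established by what you wrote. To repair the proof, replace the unfinished transfinite construction by the citation of \cite{JSpseudo} together with the sequential compactness argument just described.
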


\begin{proof} Let $X$ be the space and in the model as
  stated in Proposition \ref{propBig}. We show that
  $\beta X$ is the desired example. The only property that needs
  to be established is that $\beta X$ is pseudoradial and has
  countable tightness. We remarked above that every compactification
  of a initially $\aleph_1$-compact space has countable tightness.
  We are working in a model of $\mathfrak c =\aleph_2$, so to
  prove that $\beta X$ is pseudoradial, it is enough to
  prove that it is sequentially compact. The space $X$ is
  sequentially compact so we consider an infinite countable subset $D$
  of $\beta X\setminus X$. The closure of $D$ is not contained
  in $\beta X\setminus X$ by the statement in Proposition
  \ref{propBig}, hence there is a point $x\in X$
  that is a limit point of $D$. It 
  is easily checked that if $\{ U_n : n\in\omega\}$ is a local
  base in $X$ for $x$ consisting of cozero subsets of $X$,
 then the family $\{ \beta X\setminus \mbox{cl}_{\beta X}(X\setminus U_n) :
 n\in\omega \}$ is a local base for $x$ in $\beta X$.
 It follows then that there is a sequence from $D$ converging
 to $x$, and this completes the proof that $\beta X$
 is sequentially compact and pseudoradial.
  \end{proof}

\section{cardinality of separable pseudoradial spaces}

In this section we begin our investigation of
separable pseudoradial spaces in the absence of
the assumption of compactness. We first consider
the effect of the proper forcing axiom in the context
of a strengthening of countable tightness and then,
using large cardinals, we establish the consistency
of there being no large separable pseudoradial spaces
with countable tightness.

We introduce a natural generalization of
the property of a set being 
 sequentially closed 
in a space $X$. In particular, 
in this next definition, we would say 
that  a sequentially closed set is ${<}\aleph_1$-closed.

\begin{definition} For a cardinal $\kappa$ and a set
$A$ in a space $X$, say that $A$ is ${<}\kappa$-radially closed
if for every cardinal $\lambda<\kappa$, no $\lambda$-sequence
of points of $A$ converges to a point of $X$ outside of $A$.

For a set $A$ in a space $X$, let $A^{(<\kappa)}$ denote the smallest
${<}\kappa$-radially closed set containing $A$. 
\end{definition}

Needless to say, $A^{(<\kappa)}$ exists in every space since
the intersection of ${<}\kappa$-radially closed sets is ${<}\kappa$-radially closed.
Alternatively, $A^{(<\kappa)}$ can be recursively constructed in  a manner
analogous to the usual constructions of the sequential closure. 

\begin{proposition} Let $A$ be a subset\label{chain} of a space $X$ and let $\kappa$
be a cardinal. By induction on $\alpha \leq \kappa^{+}$, define
 $A_\alpha\subset X$ as follows:
 \begin{enumerate}
 \item $A_0 = A$,
 \item for limit $\alpha\leq\kappa^+$, $A_\alpha = \bigcup \{A_\beta : \beta < \alpha\}$,
 \item $A_{\alpha+1}$ is the set of all points of $X$ such that there is a 
 cardinal $\lambda<\kappa$
 and a $\lambda$-sequence of points of $A_\alpha$ that converges to $x$.
 \end{enumerate}
 Then $A_{\kappa^+} = A^{(<\kappa)}$.
 \end{proposition}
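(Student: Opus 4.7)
The plan is to prove the two inclusions separately, with the key ingredient being the regularity of the ordinal $\kappa^+$ to witness that the chain stabilizes.

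For the inclusion $A_{\kappa^+}\subseteq A^{(<\kappa)}$, I would proceed by transfinite induction on $\alpha\leq\kappa^+$, showing $A_\alpha\subseteq A^{(<\kappa)}$. The base case is the definition $A_0=A\subseteq A^{(<\kappa)}$. The limit case is immediate from the induction hypothesis since $A_\alpha$ is a union. The successor step $A_{\alpha+1}\subseteq A^{(<\kappa)}$ is where the work happens: if $x\in A_{\alpha+1}$, then some $\lambda$-sequence from $A_\alpha$ with $\lambda<\kappa$ converges to $x$; but by the induction hypothesis this sequence lies in $A^{(<\kappa)}$, which is ${<}\kappa$-radially closed, forcing $x\in A^{(<\kappa)}$.

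For the reverse inclusion $A^{(<\kappa)}\subseteq A_{\kappa^+}$, since $A\subseteq A_{\kappa^+}$ it suffices to check that $A_{\kappa^+}$ itself is ${<}\kappa$-radially closed, for then minimality of $A^{(<\kappa)}$ gives the containment. So fix $\lambda<\kappa$ and a $\lambda$-sequence $(x_\gamma)_{\gamma<\lambda}$ of points of $A_{\kappa^+}$ converging to some $x\in X$. For each $\gamma<\lambda$ choose $\alpha_\gamma<\kappa^+$ with $x_\gamma\in A_{\alpha_\gamma}$. Because $\lambda<\kappa<\kappa^+$ and $\kappa^+$ is regular (being a successor), the supremum $\alpha=\sup_{\gamma<\lambda}\alpha_\gamma$ remains strictly below $\kappa^+$. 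Then the whole sequence lies in the monotone set $A_\alpha$, so by clause (3) applied at stage $\alpha$ we get $x\in A_{\alpha+1}\subseteq A_{\kappa^+}$, as required.

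There is no serious obstacle beyond the cofinality bookkeeping in the second inclusion; the argument is the standard closure-ordinal pattern, and the only reason $\kappa^+$ (rather than a smaller ordinal) is the right stopping point is precisely so that any ${<}\kappa$-indexed choice of stages can be dominated inside $\kappa^+$. One minor care point worth flagging: in clause (3) the $\lambda$-sequence must draw its points from $A_\alpha$, and monotonicity of the chain $(A_\beta)_{\beta\leq\kappa^+}$, which is clear from the definition, is what allows us to promote each $x_\gamma\in A_{\alpha_\gamma}$ to $x_\gamma\in A_\alpha$.
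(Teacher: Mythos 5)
Your proof is correct and follows the same route as the paper's (much terser) argument: one inclusion by transfinite induction using that $A^{(<\kappa)}$ is ${<}\kappa$-radially closed, the other by checking that $A_{\kappa^+}$ is itself ${<}\kappa$-radially closed, which is exactly where your regularity-of-$\kappa^+$ bookkeeping belongs. Your explicit treatment of the cofinality step and the monotonicity of the chain simply fills in details the paper leaves to the reader.
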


 \begin{proof} It follows easily, by induction on $\alpha\leq \kappa^+$
 that $A_\alpha$ is a subset of $A^{(<\kappa)}$. So it suffices
 to note that $A_{\kappa^+}$ is itself ${<}\kappa$-radially closed.
 \end{proof}

A $\kappa$-sequence $\{x_\alpha : \alpha < \kappa\}$ is a free sequence
 \cite{Arhangelskii}
 in a space $X$
 if, for all $\delta<\kappa$, the initial segment $\{x_\alpha : \alpha <\delta\}$
 and the final segment $\{ x_\alpha : \delta\leq \alpha <\kappa\}$ have disjoint closures. 
 The tightness degree, $t(X)$, of a space $X$ is of course a well-known cardinal
 invariant. Similarly, the invariant, $F(X)$, is the supremum of the lengths of
 free sequences of $X$. Of course Arhangelskii showed that $t(X)\leq F(X)$ holds
 for any compact space $X$, and it was shown by Bella \cite{Bella86} that this inequality also
 holds for pseudoradial spaces. We record this for future reference.

\begin{proposition}[\cite{Bella86}]
   A pseudoradial space of uncountable tightness\label{freeseq} contains uncountable free
   sequences. 
   \end{proposition}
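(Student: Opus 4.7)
The plan is to assume $X$ is pseudoradial with $t(X) > \aleph_0$ and construct a free sequence of length $\omega_1$. Fix $A \subseteq X$ and $x \in \overline{A}$ witnessing uncountable tightness, so $x \notin \overline{C}$ for every countable $C \subseteq A$.

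I would first show $x \notin A^{(<\aleph_1)}$, the set produced by Proposition \ref{chain} with $\kappa = \aleph_1$ (the usual iterated sequential closure). Induction on the stages $A_\alpha$ shows that each $A_\alpha$ is contained in $\bigcup\{\overline{C} : C \in [A]^{\leq\omega}\}$: an $\omega$-sequence of points each lying in the closure of a countable subset of $A$ converges into the closure of the countable union of those witnesses, and at limit stages one simply unions. Since $x$ avoids every such closure, $x \notin A^{(<\aleph_1)}$. Setting $B := A^{(<\aleph_1)}$, this means $B$ is sequentially closed but $x \in \overline{B} \setminus B$, so $B$ is not closed; by the pseudoradial hypothesis $B$ is therefore not radially closed. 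Hence some $\lambda$-sequence $(b_\xi)_{\xi<\lambda}$ in $B$ converges to some $y \notin B$, where sequential closedness of $B$ forces $\lambda \geq \aleph_1$ and, by the same token, rules out any $\omega$-subsequence of $(b_\xi)$ converging to $y$.

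The final step is to extract a free $\omega_1$-subsequence from $(b_\xi)$. Using regularity of $X$, I would recursively choose $c_\alpha = b_{\xi_\alpha}$ together with a decreasing family of open neighborhoods $U_\alpha \ni y$ such that $\overline{U_\alpha}$ is disjoint from $\overline{\{c_\beta : \beta < \alpha\}}$, picking $\xi_\alpha$ large enough that $c_\alpha \in U_\alpha$ and that every later $c_\gamma$ will also lie in $U_\alpha$ (the tails of the convergent sequence being eventually inside any neighborhood of $y$). The free sequence property then follows because for each $\delta < \omega_1$ the tail $\{c_\gamma : \gamma \geq \delta\}$ sits inside $U_\delta$, so its closure is contained in $\overline{U_\delta}$, which is disjoint from the initial closure $\overline{\{c_\beta : \beta < \delta\}}$. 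The main obstacle is verifying $y \notin \overline{\{c_\beta : \beta < \alpha\}}$ at limit stages $\alpha$, which is needed before regularity can be invoked to choose $U_\alpha$; handling this requires careful bookkeeping that exploits the nonexistence of $\omega$-subsequences of $(b_\xi)$ converging to $y$ together with the pseudoradial structure inside $B$, and is the technical heart of Bella's original argument.
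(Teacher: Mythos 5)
Your reduction to a long converging sequence and your final extraction scheme are in the right spirit, but the proposal has a genuine gap exactly where you flag it, and that gap is the whole content of the proposition. Your recursion needs, at every countable stage $\alpha$, that $y\notin\overline{\{c_\beta:\beta<\alpha\}}$, so that regularity can produce $U_\alpha$. What your configuration actually gives is only that no $\omega$-sequence from $B=A^{(<\aleph_1)}$ converges to $y$. In a space of uncountable tightness these are very different statements: $y$ may perfectly well lie in the closure of a countable subset of $\{b_\xi:\xi<\lambda\}$ without any subsequence of it converging to $y$ (this is just a local failure of countable tightness, which is exactly what we are assuming exists). Nothing in the choice of $y$ as an arbitrary witness to the failure of radial closedness of $B$ rules this out, so the recursion can die at a countable stage, and appealing to ``careful bookkeeping inside $B$'' is deferring precisely the step to be proved.

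The paper's argument avoids this by choosing the converging sequence minimally rather than applying pseudoradiality once to $B$. Let $\kappa$ and then the stage $\alpha$ in the hierarchy of Proposition \ref{chain} be minimal such that some $x\in A_{\alpha+1}$ is not in the closure of any countable subset of $A$, and fix a $\lambda$-sequence $\{x_\xi:\xi<\lambda\}\subset A_\alpha$ converging to $x$. By minimality every $x_\xi$ lies in $\overline{E_\xi}$ for some countable $E_\xi\subset A$; hence for any countable $C\subset\{x_\xi:\xi<\lambda\}$ we get $\overline{C}\subset\overline{\bigcup\{E_\xi : x_\xi\in C\}}$, the closure of a countable subset of $A$, so $x\notin\overline{C}$ (in particular $\lambda\geq\omega_1$). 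This is the separation property your $y$ lacks, and once it is available your extraction (regular neighborhoods $U_\delta$ of the limit with $\overline{U_\delta}$ missing the closure of the initial segment, then a tail of the sequence inside $U_\delta$) is exactly the paper's and goes through verbatim. So the repair is not extra work inside your $B$; it is to replace ``$y$ is not a sequential limit from $B$'' by ``the limit is not in the closure of any countable subset of the chosen converging sequence,'' which requires the minimal-stage setup (or an equivalent device) rather than the one-step use of pseudoradiality you propose.
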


\long\def\eat#1{\relax}
\eat{   \begin{proof}
   Let $A$ be a subset of $X$. 
   Every point of $A^{(<\omega_1)}$ is in the closure of a countable
   subset of $A$. Let $\kappa$ be the minimal cardinal so
   that there is a point $x$ in $A^{(<\kappa)}$ that is not in the closure
   of a countable subset of $A$.  
   Let $\{ A_\alpha : \alpha \leq \kappa^+\}$ be the sequence of sets
   as defined in Proposition \ref{chain}.   Choose $\alpha <\kappa^+$
   minimal so that 
   there is a point $x\in A_{\alpha+1}$  that is not in the closure
   of a countable subset of $A$. Fix a cardinal $\lambda < \kappa$ so
   that there is a $\lambda$-sequence $\{ x_\xi : \xi<\lambda\}\subset
    A_\alpha$ that converges to $x$. We may choose $\lambda$ to
    be minimal.  It follows that $x$ is not in the closure of any 
    countable subset of $\{ x_\xi : \xi< \lambda\}$.   
  We recursively choose an increasing sequence $\{ \xi_\gamma :\gamma\in \omega_1\}
    \subset \lambda$ satisfying that 
    the closure of $\{ x_{\xi_\gamma} : \gamma <\delta\}$ is disjoint from
    the closure of $\{ x_\xi : \xi_{\delta} \leq \xi<\lambda\}$. 
    Assume that $\delta<\omega_1$ and that we have chosen $\{ \xi_\gamma : \gamma<\delta\}
    \subset \lambda$. Choose a neighborhood $U_\delta$ of $x$ satisfying that
    the closure of $U_\delta$ is disjoint from the closure of $\{ x_{\xi_\gamma} : \gamma < \delta\}$.
     Choose $\xi_\delta<\lambda$ large enough so that $\{ x_\xi : \xi_\delta \leq \xi  <\lambda\}$
     is contained in $U_\delta$. 
   \end{proof}
   }

\subsection{ PFA and countable tightness}

\begin{proposition} PFA implies that every separable
regular
pseudoradial space of cardinality greater than $\cee$
contains uncountable free sequences.
\end{proposition}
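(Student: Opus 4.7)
The plan is to prove the contrapositive, using Bella's theorem (Proposition~\ref{freeseq}) to eliminate the case of high tightness. Assume $X$ is separable, regular, pseudoradial, and admits no uncountable free sequence; we aim for $|X|\le\cee$. Pseudoradiality combined with the absence of uncountable free sequences forces $t(X)=\aleph_0$, so from now on we may also assume $X$ is countably tight.

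Fix a countable dense set $D\subseteq X$ and recursively define $D_0=D$, $D_{\alpha+1}=\operatorname{scl}(D_\alpha)$ (sequential closure), and $D_\alpha=\bigcup_{\beta<\alpha}D_\beta$ at limit stages. Since $|A|^{\aleph_0}\le\cee$ whenever $|A|\le\cee$, an easy induction gives $|D_\alpha|\le\cee$ for every $\alpha\le\omega_1$, and the standard cofinality-$\omega_1$ argument shows that $D_{\omega_1}$ is sequentially closed. It therefore suffices to prove $D_{\omega_1}=X$. Suppose not; pseudoradiality forces $D_{\omega_1}$ to fail to be radially closed, so some $\lambda$-sequence (with $\lambda\le|X|$) in $D_{\omega_1}$ converges to a point $x\in X\setminus D_{\omega_1}$. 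Countable tightness extracts a countable $C$ from its range with $x\in\overline{C}$, and the $\omega_1$-chain decomposition of $D_{\omega_1}$ gives $\beta<\omega_1$ with $C\subseteq D_\beta$. Producing an ordinary $\omega$-sequence from $C$ with limit $x$ would place $x\in D_{\beta+1}\subseteq D_{\omega_1}$, yielding the desired contradiction.

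The main obstacle, and the only place where PFA is genuinely used, is this last extraction step: in general, countable tightness does not imply that a point in the closure of a countable set is a sequential limit from that set. The plan is to feed the situation into a strong combinatorial consequence of PFA, for concreteness Todorcevic's $P$-ideal dichotomy, applied to a $P$-ideal encoding ``does not accumulate at $x$''. To have access to the genuinely uncountable alternative of the dichotomy, the ideal is set up on a set of size $\aleph_1$ rather than on $C$ itself --- for instance, on an uncountable index set built from countable witnesses $C_y$ arising as $y$ ranges over $X\setminus D_{\omega_1}$ --- so that both alternatives carry topological content. The countable-decomposition alternative yields an $\omega$-sequence in $C$ converging to $x$ directly from the hypothesis $x\in\overline{C}$, while the uncountable-orthogonal alternative is to be ruled out by combining pseudoradiality with the standing absence of uncountable free sequences, since the orthogonal family is used to build the skeleton of exactly such a sequence. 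Verifying the $P$-ideal property in the pseudoradial setting and extracting the free-sequence contradiction in the orthogonal case is the main technical content of the proof.
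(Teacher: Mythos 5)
Your reduction is fine as far as it goes: invoking Bella's theorem to assume countable tightness is exactly how the paper starts, and the bookkeeping with the sequential closure $D_{\omega_1}$ (size at most $\cee$, sequentially closed, hence not radially closed if proper) is correct. But everything after that is a placeholder, and the placeholder is precisely where the theorem lives. Your ``extraction step'' --- from $x\in\overline{C}$ with $C$ countable, produce an $\omega$-sequence in $C$ converging to $x$ --- is exactly the kind of statement that fails in general countably tight spaces, and nothing in your sketch supplies the missing ingredient. The proposed appeal to the $P$-ideal dichotomy is not an argument: you never define the ideal, never verify that it is a $P$-ideal (in all known arguments of this type that verification needs a compactness surrogate --- compactness itself, countable compactness, or a countably complete filter), and never explain how either alternative produces what you claim; indeed the two alternatives are mislabeled (the decomposition alternative is the one into countably many \emph{orthogonal} pieces, the other alternative is the uncountable homogeneous set), which suggests the mechanism has not been thought through. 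Moreover, your route would establish something strictly stronger than the proposition, namely that under PFA such a space equals the sequential closure of its countable dense set; the paper neither claims nor proves this, and it does not follow from the cardinality bound, so you are raising the bar without any new tool to clear it.

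For comparison, the paper never argues by contradiction from ``no free sequences'': it constructs the uncountable free sequence directly. The compactness surrogate is an elementary submodel $M\prec H(\kappa)$ of size $\cee$ closed under $\aleph_1$-sequences, so that $Y=X\cap M$ is ${<}\aleph_2$-radially closed; for a point $z\notin Y$ one shows (using pseudoradiality, the density of $\omega$, and elementarity) that the traces $\mathcal F_z$ of closed separable subsets of $Y$ containing closures of countable sets accumulating at $z$ form a \emph{countably complete maximal} filter of closed sets. That countable completeness is what replaces compactness in the Moore--Mr\'owka machinery, and PFA is then applied to a proper side-condition poset (finite $\in$-chains of countable elementary submodels with a neighborhood assignment avoiding $z$) whose generic filter yields the uncountable free sequence in $X$ itself. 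If you want to salvage a PID-style proof you would still have to manufacture an analogue of that countably complete filter on a $\cee$-sized radially closed trace of $X$; working only with the countable set $C$ and the sequential closure of the dense set gives you no such object, so as written the proposal has a genuine gap at its central step.
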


\begin{proof}  
Let $X$ be a separable regular pseudoradial space of cardinality
greater than $\cee$. Since we simply wish to prove that $X$
contains an uncountable free sequence, we may assume, by
  Corollary \ref{freeseq},  that $X$ has countable tightness.
  Let $\kappa$ be a large enough regular cardinal
so that  $X$ is an element of $H(\kappa)$ (see \cite{Kunen}*{IV \S6}).
Let $X$ be an element of an elementary submodel $M$
of $H(\kappa)$ where $M$ is chosen so that $|M|=
 2^{\aleph_1}=\cee$ and so that 
every subset of $M$ of cardinality at most $\aleph_1$
is an element of $M$. We note that $Y=X\cap M$ is 
 ${<}\aleph_2$-radially closed. We may assume
 that $\omega$ is a dense subset of $X$.
 Let $\mathcal W_z = \{ W\subset \omega : 
 z\in \mbox{int}_X\,\mbox{cl}_X(W)\}$.  
Define the family $\mathcal H_z$ to be all countable
subsets of $Y$ that have $z$ in their closure. Let
 ${\mathcal F}_z$ denote the family of all closed
 subsets of $Y$ that contain a member of the family
  $\{ \mbox{cl}_Y (H) : H\in \mathcal H_z\}$. 

\begin{claim} 
$\mathcal F_z$ is a countably complete maximal filter of closed subsets of $Y$.
\end{claim}

\bgroup

\def\proofname{Proof of Claim:}

\begin{proof}  Let $\{ H_n : n \in\omega\}$ be a subset of $\mathcal H_z$. 
Let $F_\omega = \bigcap\{ \mbox{cl}_Y)(H_n) : n\in \omega\}$ and assume
there is $W\in \mathcal W_z$ such that $\mbox{cl}_X(W)$ is disjoint
from $F_\omega$. Since $W\in M$, we can replace each $H_n$ 
by $\mbox{int}_X\,\mbox{cl}_X(W)\cap H_n$ since the latter is also
in $\mathcal H_z$. However, this is impossible since $z\in\mbox{cl}_X(H_n)$
for all $n\in\omega$,  by elementarity,
 there are points of $Y$ in $F_\omega$.  Now it follows that $z$ is in the
 closure of $F_\omega$, and by countable tightness, there is an $H_\omega\in
 \mathcal H_z$ with $H_\omega\subset F_\omega$. 
\end{proof}

\egroup

It is evident from its definition that $\mathcal F_z$ has a base of separable sets.
Only minor modifications of any number of published proofs in connection to
the Moore-Mrowka problem (e.g. \cite{PrahaIII}*{Theorem 3.3})
 that these hypotheses on the filter $\mathcal F_z$
are sufficient to conclude that
 there is a proper poset $P$ that will force
an uncountable free sequence in the space $X$. We provide  a sketch.
Fix a neighborhood assignment $\mathcal U_Y=\{  U_y : y\in Y\}$ so that for each
$y\in Y$, $z$ is not in the closure of $U_y$. 
A member $p$ of the poset
 $P$ is a function into $Y$ with a finite domain $\mathcal M_p$.
  The domain $\mathcal M_p$ is a finite $\in$-chain of countable elementary
  submodels of $H(\kappa)$ satisfying that $\{X,Y,\mathcal F_z, \mathcal U_Y\}$
  are members of each. For each $\{M_1,M_2\}\subset \mathcal M_p$ 
  with $M_1\in M_2$, the value of $p(M_1)$ is an element of $M_2\cap F$
  for all $F\in \mathcal F_z\cap M_1$. The poset is ordered by the conditions
  that $p<q$ providing $p\supset q$ and for all
    $M_1\in \mathcal M_p\setminus \mathcal M_q$, if there is a (minimal)
     $M_2\in \mathcal M_q$ such that $M_1\in M_2$, then 
       $p(M_1)\in U_{p(M_3)}$ for all $M_3\in \mathcal M_q$ satisfying
       that $p(M_2)\in U_{p(M_3)}$. We omit the proof that $P$ is proper
       (see any of [ ]). It is easily shown that for each $\delta\in\omega_1$,
       the set $D_\delta = \{ p\in P  :  (\exists M_p\in \mathcal M_p)~~ \delta\in M_p\}$
       is  a dense subset of $P$. By PFA, there is a filter $G\subset P$ such that
$G\cap D_\delta$ is not empty for all $\delta<\omega_1$. This implies there
is an uncountable set $C_G\subset \omega_1$ such that 
$\gamma\in C_G$ if and only if there is an $M_\gamma\in \bigcup\{ \mathcal M_p  :
p\in G\}$ with $M_\gamma \cap \omega_1=\gamma$. 
For each $\gamma\in C_G$, choose any $p_\gamma\in G$ such that $M_\gamma\in 
\mathcal M_{p_\gamma}$.  Since $G$ is a filter, we may assume that 
for all $\gamma\in C_G$, $M_{\min(C_G)}\in \mathcal M_{p_\gamma}$.

 For each $\gamma\in C_G$, also let
 $F_\gamma  = \bigcap  \mathcal F_z\cap M_\gamma$.  
The sequence
 $\{ x_\gamma = p_\gamma(M_\gamma) : \gamma\in C_G\}$ 
 satisfies that  for each $\delta\in C_G$,  $F_\delta$ contains the set
  $\{ x_\gamma : \delta \leq \gamma\}$. It also follows that, for $\gamma<\delta$
  both in $C_G$, the closure of
   $U_{x_\gamma}$ is disjoint from $F_\delta$ because of the condition
   that $z$ is not in the closure of $U_{x_\gamma}$. Finally, 
 for each $\min(C_G)<\delta\in C_G$, there is a $\beta\in C_G\cap \delta$ such
 that $\{x_\xi : \beta < \xi\in C_G\cap \delta\}$ is contained $U_{x_\delta}$. 
Indeed, then there is a finite $S_\delta\subset\delta{+}1$ such that 
$\{ x_\xi : \xi\in C_G\cap \delta\}$ is contained in $\bigcup\{ U_{x_\xi} : \xi\in S_\delta\}$.
This completes the proof that the closure of $\{ x_\xi : \xi\in C_G\cap \delta\}$ is
disjoint from the closure of $\{ x_\xi : \delta < \xi \in C_G\}$. 
\end{proof}

In compact spaces, the property of being countably tight is equivalent
to the property that every subset has countable $\pi$-character
\cite{Boris1}. A family
of non-empty open sets $\mathcal U$ is a local $\pi$-base in $X$ at $x$ if every
neighborhood of $x$ contains a member of $\mathcal U$. A point has
countable $\pi$-character if it has a countable local $\pi$-base. It was discovered
by Eisworth \cite{Todd} that in the absence of compactness, the property
of hereditary countable $\pi$-character was an important strengthening
of countable tightness. In particular, it was shown in \cite{DowTodd}
and \cite{picharacter} that it was consistent that countably compact subsets
of such spaces are closed. Using this result we note the following
application.

\begin{corollary} PFA implies that every separable countably
compact pseudoradial regular space
has cardinality at most $\cee$
if the space also has
 hereditary countable $\pi$-character.
\end{corollary}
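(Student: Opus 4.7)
The plan is to argue by contradiction, combining the preceding proposition with the cited consistency result of \cite{DowTodd} and \cite{picharacter}. Suppose, under PFA, that $X$ is separable, regular, countably compact, pseudoradial, has hereditary countable $\pi$-character, and that $|X| > \mathfrak c$. Since tightness is bounded above by $\pi$-character, the hypothesis of hereditary countable $\pi$-character automatically gives countable tightness, so the preceding proposition immediately produces an uncountable free sequence $F = \{x_\alpha : \alpha < \omega_1\}$ in $X$.

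Next I would invoke the cited result: in the relevant PFA model, every countably compact subspace of a space of hereditary countable $\pi$-character is closed. Since this hypothesis is inherited by subspaces and $X$ is itself countably compact, the result applies uniformly to $X$ and to its closed subsets. From this I would argue that $X$ must be sequential; equivalently, that the sequential closure of any countable subset of $X$ coincides with its topological closure. In the present countably tight pseudoradial setting this reduces to the task of refining any transfinite convergent sequence into a convergent $\omega$-sub-sequence. Once sequentiality of $X$ is established, separability forces $|X| \le \mathfrak c$, contradicting the assumption on $|X|$.

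The main obstacle is this passage from "countably compact subspaces are closed" to the extraction of convergent $\omega$-sub-sequences. Concretely, for each $\delta < \omega_1$ one picks an accumulation point $p_\delta$ of the tail $\{x_\alpha : \alpha \ge \delta\}$, uses countable tightness to find a countable $C_\delta \subseteq \{x_\alpha : \alpha \ge \delta\}$ having $p_\delta$ in its closure, and then applies the cited result to the closed countably compact subspace $\overline{C_\delta}$, which still has hereditary countable $\pi$-character, in order to produce a convergent $\omega$-sub-sequence of $F$. Any such convergent $\omega$-sub-sequence would be contained in a bounded initial segment $\{x_\alpha : \alpha < \delta'\}$, so its limit would sit in the closure of that initial segment, while by construction it would also accumulate on every final segment of $F$, contradicting the defining disjointness-of-closures property of a free sequence.
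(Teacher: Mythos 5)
Your argument breaks down at the final step, and in a way that cannot be patched without a different use of separability. The contradiction you want is that a convergent $\omega$-subsequence of the free sequence $F=\{x_\alpha:\alpha<\omega_1\}$ has its limit both in the closure of a bounded initial segment and "accumulating on every final segment." Nothing in your construction delivers the second half: you choose $p_\delta$ as an accumulation point of the single tail $\{x_\alpha:\alpha\ge\delta\}$, and the convergent sequence extracted inside $\overline{C_\delta}$ (assuming you can extract one --- the cited result only says countably compact subsets are closed, it does not manufacture convergent subsequences; what does is pseudoradiality plus countable compactness, which give sequential compactness) converges to some point whose relation to the tails beyond $\delta'$ is uncontrolled. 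In fact a convergent $\omega$-subsequence of a free sequence is no contradiction at all: in $\omega_1$ with the order topology --- countably compact, first countable (hence hereditarily countable $\pi$-character and countably tight), pseudoradial, and with countably compact subsets closed --- the identity sequence is a free $\omega_1$-sequence full of convergent $\omega$-subsequences. Since $\omega_1$ satisfies every hypothesis you use after invoking the free-sequence proposition, no argument of the shape you describe can succeed; separability (which only entered through that proposition) must be used again, and the intermediate claim that $X$ "must be sequential" is likewise never established.

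The paper's proof is much more direct and avoids free sequences entirely: let $S$ be the sequential closure of the countable dense set, so $|S|\le\mathfrak c$. Because $X$ is pseudoradial and countably compact, every infinite countable subset contains a convergent sequence (otherwise it would be closed discrete), so $X$ is sequentially compact and $S$, being sequentially closed, is countably compact. By the PFA result of \cite{picharacter}, countably compact subsets of a space with hereditary countable $\pi$-character are closed, so $S$ is closed; since $S$ contains a dense set, $S=X$ and $|X|\le\mathfrak c$. If you want to salvage your write-up, replace the free-sequence/sequentiality detour with this closure argument.
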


\begin{proof} Assume that $X$ is a separable countably compact pseudoradial
regular space. The sequential closure of the countable dense set is 
countably compact and has cardinality $\cee$. It was proven in
\cite{picharacter} that PFA implies that 
countably compact subsets are closed in such spaces.
\end{proof}

\begin{question} Let $X$ be a countably compact regular pseudoradial
space of countable tightness. Does PFA imply any of the following?
\begin{enumerate} 
\item $X$  has cardinality at most $\cee$.
\item Countably compact subsets of $X$ are closed.
\item Every subset of $X$ has countable $\pi$-character.
\end{enumerate}
\end{question}

\subsection{Consistency of no large countably tight examples}

In this section we prove that it is consistent, modulo large cardinals, 
that every separable regular pseudoradial space of countable tightness
has cardinality at most $\cee$. We give the result using a measurable
cardinal for simplicity, but  the same proof can be modified to work
for a weakly compact cardinal.   A cardinal $\kappa$ is measurable
if there is a $\kappa$-complete free ultrafilter on $\kappa$. 

Before proving the theorem we make note of a simple property shared
by pseudoradial
spaces.

\begin{proposition} Assume\label{limits}
 that a point $z$ of a pseudoradial 
regular space $X$ 
is a limit point  of a countable subset $A$ of $X$.  Let $A^{(1)}$ 
denote the set of points of $X$ that are limit points of a converging
sequence from $A$. Then $z$ is in the closure of $A^{(1)}\setminus A$.
\end{proposition}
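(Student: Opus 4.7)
The plan is to argue by contradiction, invoking the defining property of pseudoradial spaces: every radially closed set is closed. Fix an arbitrary open neighborhood $U$ of $z$; the goal is to produce a point of $A^{(1)}\setminus A$ lying in $U$. We may assume $z\notin A$, as this is the substantive content of the statement. Suppose for contradiction that $U\cap(A^{(1)}\setminus A)=\emptyset$, i.e., $A^{(1)}\cap U\subseteq A$.

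The candidate witness of the contradiction is the set
\[
  S \;=\; (A\cap U)\cup(X\setminus U).
\]
Since $U$ is itself an open neighborhood of $z$ and $z$ is a limit point of $A$, every open neighborhood of $z$ meets $A\cap U\subseteq S$, whence $z\in\overline{S}$; but $z\in U\setminus A$ gives $z\notin S$. So once $S$ is shown to be radially closed, pseudoradiality will force $S=\overline{S}\ni z$, the required contradiction.

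The bulk of the work is verifying that $S$ is radially closed. Let $\{y_\alpha:\alpha<\kappa\}$ be a $\kappa$-sequence of points of $S$ converging to some $y\in X$. If $y\notin U$ then $y\in X\setminus U\subseteq S$, so assume $y\in U$; convergence then yields a tail of the sequence lying in $U$, hence in $S\cap U=A\cap U$. For $\kappa=\omega$ this tail is an ordinary convergent $\omega$-sequence from $A$ with limit $y$, placing $y$ in $A^{(1)}\cap U$, which by our standing assumption is contained in $A\subseteq S$. For $\kappa>\omega$ the tail is a long sequence taking values in the \emph{countable} set $A\cap U$, so a pigeonhole argument produces some $a\in A\cap U$ attained on a cofinal set of indices, and convergence together with the Hausdorff property then force $y=a\in S$.

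The main obstacle is the uncountable case $\kappa>\omega$, and it is exactly there that the countability of $A$ is essential: a long converging sequence into a countable set must take some value cofinally often, pinning the limit to a point of $A$ itself. This is the sole place where the hypothesis that $A$ is countable is used.
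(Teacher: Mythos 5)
Your strategy is essentially the one the paper uses: assuming some neighborhood of $z$ misses $A^{(1)}\setminus A$, you build a radially closed set which has $z$ in its closure but does not contain $z$, contradicting pseudoradiality (the paper applies this to $\overline{W}\cap A^{(1)}$, which under that assumption is a subset of $A$; your $S=(A\cap U)\cup(X\setminus U)$ is a cosmetic variant). There is, however, one step in your verification that $S$ is radially closed that is false as stated: it is not true that every converging $\kappa$-sequence with $\kappa>\omega$ taking values in a countable set must attain some value on a cofinal set of indices. This fails precisely when $\kappa$ is uncountable of countable cofinality. For instance, with $\kappa=\aleph_\omega$, take a nontrivially convergent $\omega$-sequence $a_n\to y$ inside $A\cap U$ and let the $\aleph_\omega$-sequence be constant with value $a_n$ on the block of indices $[\aleph_n,\aleph_{n+1})$: it converges to $y$, yet no value occurs cofinally, so the pigeonhole produces nothing (here the limit is still in $S$, but only because $y\in A^{(1)}\cap U\subseteq A$, i.e.\ for the ``$\omega$-case'' reason, not yours). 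Since the paper's definition of radially closed quantifies over all cardinals $\kappa\le|X|$ and the spaces of interest have cardinality greater than $\mathfrak{c}$, such singular $\kappa$ are genuinely in play, so this case cannot simply be skipped.

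The repair is short and uses only what you already have: replace the dichotomy ``$\kappa=\omega$ versus $\kappa>\omega$'' by a reduction to $\mathrm{cf}(\kappa)$. Given a converging $\kappa$-sequence from $S$ with limit $y\in U$, first pass to the tail lying in $A\cap U$, then to an increasing cofinal set of indices of order type $\mathrm{cf}(\kappa)$; the resulting subsequence still converges to $y$. If $\mathrm{cf}(\kappa)=\omega$ (which covers $\kappa=\omega$ and all singular $\kappa$ of countable cofinality), this is an $\omega$-sequence from $A\cap U$, so $y\in A^{(1)}\cap U\subseteq A$, hence $y\in S$. If $\mathrm{cf}(\kappa)>\omega$, it is regular and uncountable, and your pigeonhole argument is now valid: some $a\in A\cap U$ occurs cofinally and Hausdorffness gives $y=a\in S$. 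With this adjustment your proof is correct and in substance coincides with the paper's.
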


\begin{proof} Since $z$ is a limit point of $A$
we may assume that $z\notin A$  and, to be clear,
 we note that $A\subset A^{(1)}$.
   If $z$ is an element of $A^{(1)}$ then
 there is nothing
to prove. Let $W$ be a neighborhood of $z$ and assume that
 $A_W = \overline{W}\cap A^{(1)}$ is contained in $A$. 
 Certainly $z$ is in the closure of $A_W$ but this contradicts
 that $X$ is pseudoradial 
 since $A_W$ is clearly radially closed. 
 \end{proof}

\begin{theorem} If $\kappa$ is a measurable cardinal\label{mble}
 then in the forcing
extension obtained by adding $\kappa$ many Cohen reals, every 
separable regular pseudoradial space of countable tightness has
cardinality at most $\cee$.
\end{theorem}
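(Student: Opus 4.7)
The plan is to argue by contradiction in $V[G]$, where $G$ is generic for $\Fn(\kappa,2)$ over $V$. Assume $X$ is a separable regular pseudoradial space of countable tightness with $|X|>\cee$. Since measurable cardinals are inaccessible, $\cee=\kappa$ in $V[G]$, so I fix $\kappa^{+}$ distinct points $\{x_\alpha:\alpha<\kappa^{+}\}\subseteq X$ and a countable dense $D\subseteq X$, together with $\Fn(\kappa,2)$-names for these objects. The ccc of Cohen forcing permits each $\dot x_\alpha$ to be taken with countable support $s_\alpha\subseteq\kappa$. The $\Delta$-system lemma plus a pigeonhole on name-types --- of which there are only $\cee^{V}<\kappa$ on any fixed countable support, up to the natural action of $\Fn(\kappa,2)$-automorphisms --- yields an unbounded $I\subseteq\kappa^{+}$, a countable root $r$, and $\Fn(\kappa,2)$-automorphisms fixing $r$ pointwise and permuting the tops $s_\alpha\setminus r$ that map $\dot x_\alpha\mapsto\dot x_\beta$ for all $\alpha,\beta\in I$. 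Consequently the topological type of $(x_{\alpha_0},\ldots,x_{\alpha_{n-1}})$ over $V[G\restriction r]$ depends only on the order type of $\alpha_0<\cdots<\alpha_{n-1}$ in $I$ --- the indiscernibility we will exploit.

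The measurable cardinal enters in the second step. Fix a $\kappa$-complete free ultrafilter $\mathcal U\in V$ on $\kappa$; by ccc-ness of the forcing, $\mathcal U$ generates an $\omega_1$-complete uniform ultrafilter $\overline{\mathcal U}$ on $I\cap\kappa$ in $V[G]$. Combining $\overline{\mathcal U}$ with the indiscernibility just established, I produce a diagonal limit point $y\in X$: by indiscernibility, for each open $U$ coded by a countable fragment of $G$ either $\{\alpha:x_\alpha\in U\}$ or its complement lies in $\overline{\mathcal U}$, and $\omega_1$-completeness then assembles the ``$U\ni y$'' sets into a consistent filter of neighborhoods whose convergence to a concrete $y\in X$ is obtained from pseudoradiality and Proposition \ref{limits} (applied along a countable ladder of witnesses provided by countable tightness). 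A careful bookkeeping shows $y$ is determined by a countable sub-filter, so $y\in V[G\restriction r']$ for some countable $r'\supseteq r$, and $y$ is fixed by every $\Fn(\kappa,2)$-automorphism acting trivially on $r'$.

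The main obstacle is the final contradiction. By countable tightness there is $\{x_{\alpha_n}:n<\omega\}\subseteq\{x_\alpha:\alpha\in I\cap\kappa\}$ with $y\in\overline{\{x_{\alpha_n}\}}$. Since $I\cap\kappa$ is unbounded in $\kappa$, choose indices $\{\alpha'_n:n<\omega\}\subseteq I\cap\kappa$ whose tops $s_{\alpha'_n}\setminus r$ are mutually disjoint and disjoint from $r'\cup\bigcup_n s_{\alpha_n}$; an $\Fn(\kappa,2)$-automorphism fixing $r'$ pointwise (hence $y$) and sending $\dot x_{\alpha_n}\mapsto\dot x_{\alpha'_n}$ forces $y\in\overline{\{x_{\alpha'_n}\}}$ as well. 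The contradiction is extracted by using the disjointness of the tops and regularity of $X$ --- via a standard genericity/fusion argument --- to exhibit a closed neighborhood $F$ of $y$ that contains none of the $x_{\alpha'_n}$. The authors' comment that ``measurable'' can be replaced by ``weakly compact'' indicates that the ultrafilter construction of $y$ should be replaceable by the tree property at $\kappa$ applied to a natural tree of countable approximations to $y$.
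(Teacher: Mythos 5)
Your proposal is built on an automorphism/indiscernibility scheme that does not work for this problem, and it bypasses exactly the points where the paper has to work. First, an automorphism of $\Fn(\kappa,2)$ acts on a name by moving conditions only; it cannot send a name of one point of $X$ to a name of a \emph{different} point. Taking the underlying set of $X$ to be an ordinal $\theta>\kappa$ (as one may), the possible values of $\pi(\dot x_\alpha)$ are the same ordinals as those of $\dot x_\alpha$, so ``automorphisms fixing $r$ that map $\dot x_\alpha\mapsto\dot x_\beta$'' do not exist for distinct points; relatedly, the count ``only $\cee^V$ name-types on a fixed countable support'' is wrong, because a name for a point must specify ordinals below $\theta$, of which there are more than $\kappa$. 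Even setting this aside, the name for the topology of $X$ has support of full size $\kappa$ and is \emph{not} fixed by the automorphisms you invoke, so homogeneity does not transfer statements such as $y\in\overline{\{x_{\alpha_n}:n\in\omega\}}$ to the re-indexed points $x_{\alpha'_n}$. Second, the ultrafilter step fails: after adding $\kappa$ Cohen reals the filter generated by $\mathcal U$ is no longer an ultrafilter in $V[G]$ (the set $\{\alpha<\kappa : c_\alpha(0)=0\}$, where $c_\alpha$ is the $\alpha$-th Cohen real, is measured by neither it nor its complement), so $\overline{\mathcal U}$ cannot be used to decide sets like $\{\alpha : x_\alpha\in U\}$ that live in the extension. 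Finally, the step that would actually produce the contradiction --- the ``standard genericity/fusion argument'' giving a closed neighborhood of $y$ missing all $x_{\alpha'_n}$ --- is precisely the hard part and is not supplied (and fusion is not a Cohen-forcing technique).

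The paper's argument is organized quite differently, and the differences are exactly what repair these gaps. Pseudoradiality is used essentially and from the start: since $\cee=\kappa$ in $V[G]$, the ${<}\kappa$-radial closure $\omega^{(<\kappa)}$ of the countable dense set has size at most $\kappa$, so if $|X|>\kappa$ it is not closed, and pseudoradiality yields a $\kappa$-sequence from $\omega^{(<\kappa)}$ converging to a point $z$ outside it. The measurable cardinal is then used entirely in $V$, as a pressing-down/uniformization device on ground-model objects --- the names $\dot y_\xi$, the pairs $(\alpha_\xi,p_\xi)$, the conditions $q(\lambda,\mu)$ and their supports (Claims \ref{pressdown}--\ref{smallDelta} and the stronger $\Delta$-system claim) --- never as an ultrafilter in $V[G]$. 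The contradiction is then extracted using countable tightness together with Proposition \ref{limits} (this is where regularity enters) to find a countable set $J_2\subset A^{(1)}\setminus A$ with $z\in\overline{J_2}$, and a density argument over the $\Delta$-system shows $J_2\subseteq\overline{A_\mu}$ while $z\notin\overline{A_\mu}$. In your sketch pseudoradiality appears only in a parenthetical, but without it no such theorem should be expected: the hypothesis is what converts ``$|X|>\cee$'' into the long converging sequence on which the whole name-combinatorial argument operates.
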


\begin{proof} Let $\kappa$ be a measurable cardinal. By \cite{Jech}*{(8.7),10.20} ,
 there is a normal $\kappa$-complete ultrafilter $\mathcal U$
 on $\kappa$.  This means that for every $U\in \mathcal U$ and
 regressive function $f$ on $U$ (i.e. $f(\alpha)<\alpha)$ for
 all $\alpha\in U$)
 there is a   $U_1\in \mathcal U$ satisfying that $ f(\lambda)=f(\mu)$
 for all $\lambda,\mu\in U_1$. Note also that it follows that each 
  $U\in \mathcal U$ is a stationary subset of $\kappa$ \cite{Jech}*{10.19}. 
  More surprisingly,
 there is an element $U$ of $\mathcal U$ consisting only of regular
 limit cardinals. To review this, first we note that the function sending $\mu^+$ to $\mu$
 is regressive on the set of successor cardinals and so this set can not
 be an element of $\mathcal U$. Now we prove that there is a member
 of $\mathcal U$ consisting only of regular cardinals.
This is because if
the function $f(\lambda)=\mbox{cf}(\lambda)$ was regressive
on a  set $U$ of $\mathcal U$, i.e. every member of $U$ would have
the same cofinality $\delta<\kappa$,  then we would have $\delta$-many
regressive functions (to elements of cofinal sequences)  and one of these
could not be constant on a member of $\mathcal U$. Needless to say,
 it is also true that $\kappa$ is a strongly inaccessible cardinal.
    
    \begin{claim} If $\{ S_\xi : \xi \in\kappa\}$ is any\label{pressdown}
     sequence of countable
    subsets of $\kappa$, then there is a set $\bar S$ and a set $U\in \mathcal U$
    such that for all $\lambda,\mu\in U$ with $\lambda<\mu$, 
      $\bar S=S_\lambda\cap \lambda$ and $S_\lambda\subset \mu$.
    \end{claim}
     \bgroup
     
     \def\proofname{Proof of Claim:\/}
     
     \begin{proof}
     For each $\xi\in \kappa$, let $\delta_\xi<\omega_1$ be the order-type of $S_\xi\cap \xi$
     and let $\{ \beta^\xi_\eta : \eta < \delta_\xi\}$ be an enumeration of $S_\xi\cap \xi$.
     Choose $U_0\in \mathcal U$ and $\delta<\omega_1$ so that $\delta_\lambda=\delta$
     for all $\lambda\in U_0$. Similarly, for each $\eta<\delta$, choose $\beta_\eta <\kappa$
     and $U_{\eta+1}\in\mathcal U$ 
     so that $\beta^\lambda_\eta=\beta_\eta$ for all $\lambda\in U_{\eta+1}$. 
     Let $\bar U \in \mathcal U$ equal $U_0\cap \bigcap \{ U_{\eta +1}: \eta<\delta\}$.
     Choose any cub $C\subset \kappa$ so that for all $\lambda\in C$ and $\gamma<\lambda$,
      $S_\gamma\subset \lambda$. Now it follows that $C\cap \bar U\in \mathcal U$
      has the required properties.      
     \end{proof}
     
     We say that an indexed subset $\{ p_\xi : \xi < \delta\}$ of the poset
      $\Fn(\kappa,2)$ is a $\Delta$-system if there is a $\bar p\in \Fn(\kappa,2)$
      such that for all $\xi\neq\eta < \delta$, $p_\xi\restriction \dom(\bar p) = \bar p$
      and the members of the
      family $\{ \dom(p_\xi)\setminus \dom(\bar p) : \xi<\delta\}$ are pairwise disjoint.
      We say that $\bar p$ is the root of the $\Delta$-system. We record two well-know
      facts about Cohen forcing.
     
     \begin{claim} If  $\{ p_\xi : \xi < \delta\} $ is\label{Delta}
      a $\Delta$-system in $\Fn(\kappa,2)$
     with root $\bar p$, then for every generic filter $G$ with $\bar p\in G$ and
     every infinite subset $J\subset \delta$
      the set $\{ \xi \in J : p_\xi\in G\}$ has cardinality $|J|$.
     \end{claim}
     
     \begin{claim} For\label{smallDelta}
      any uncountable regular cardinal $\mu\leq \kappa$ and
      sequence $\{ p_\xi : \xi <\mu\}$, there is a set $J$ cofinal in $\mu$ so that
       $\{ p_\xi : \xi\in J\}$ is a $\Delta$-system.
       \end{claim}

     \egroup
        
    Now suppose that $\theta>\kappa$ is a cardinal and that 
    in the extension by the poset $\Fn(\kappa,2)$
     there is a regular pseudoradial topology on $\theta$
     in which $\omega$ is dense and which has countable tightness.
     
     If $G$ is a  $\Fn(\kappa,2)$-generic filter,
     then in $V[G]$, by  \cite{Kunen}*{VII}), $\cee = \kappa$ and,
     for all infinite cardinals $\lambda<\kappa$, $2^\lambda=\kappa$. 
Similarly, the cardinals  of $V$ 
and the cardinals of $V[G]$ coincide. 
     It therefore follows that, in $V[G]$, the cardinality of 
      $\omega^{(<\kappa)}$ is at most $\kappa$.
We will use the convention that a nice name of a subset of $\theta$
is a set $\dot A$ of the form $\bigcup \{ \{\alpha\}\times A_\alpha : \alpha\in I\}$
where $I$ is any subset of $\theta$ and for each $\alpha\in I$,
 $A_\alpha$ is a countable subset of $\Fn(\kappa,2)$. 
 We will say that $\dot A$ is a nice name of a subset of $I$ to mean
 that $\{ \alpha \in \theta : A_\alpha \neq \emptyset \} \subset I$. 
 When we say that $\dot A$
  is an $I$-name we will mean in this sense.
 Usually
  each $A_\alpha$ is taken to be an antichain but it
  will be convenient to use our generalization.  
   If $G$ is a $\Fn(\kappa,2)$-generic
  filter, then $\val_{G}(\dot A)$ is defined as $\{ \alpha : G\cap A_\alpha\neq\emptyset\}$. 
  It is shown in \cite{Kunen} that, in $V[G]$, every subset of $\theta$ is
  equal to $\val_{G}(\dot A)$ for some nice name $\dot A$ (in the ground model).
Moreover,
 for every infinite subset $J$ of $\theta$ in $V[G]$ there is
  a ground model set $I$ and a nice name of a subset of $I$ for $J$,
  where $I$ and $J$ have the same cardinality in $V[G]$.
  Similarly, it   
  can be shown that for
  every subset $\mathcal A$ of the power set of $\theta$ in $V[G]$,
  there is a family $\mathcal N$ of nice names of subsets of $\theta$
  such that $\mathcal A$ is equal to 
 $\{ \val_G(\dot A) : \dot A\in \mathcal N\}$. Therefore, we may suppose
 that we have families $\{ \mathcal N_\alpha : \alpha\in \theta\}$ 
 where, for each $\alpha<\theta$, $\mathcal N_\alpha$ is the family
 of nice names of the open subsets of $\theta$ that contain $\alpha$. 
 
 \bigskip
  
 Another subset of $\theta$ that is of interest is the set that will
 evaluate 
 to $\omega^{(<\kappa)}$ in $V[G]$. There is a set $I\subset \theta$
 of cardinality at most $\kappa$ so that there is a nice name $\dot Y$
 of a subset of $I$ satisfying that, for any generic filter $G\subset
 \Fn(\kappa,2)$, $\val_G(\dot Y)$ contains
  $\omega^{(<\kappa)}$. 
  Next, we may choose a condition $\bar p\in G$
   and an ordinal $z\in \theta\setminus I$ such that $\bar p$ forces
   that there is a $\kappa$-sequence from $\omega^{(<\kappa)}$
   that converges to $z$. Again we may suppose that there is a
    sequence $\{ \dot y_\xi : \xi < \kappa\}$ where for each $\xi<\kappa$,
    $\dot y_\xi$ is a nice name of a single ordinal that is forced by $\bar p$
    to be in $\dot Y$ and so that the sequence is forced to converge
    to $z$. It follows that each $\dot y_\xi$ is an $I_\xi$-name for some
    countable (or finite) set $I_\xi$. 
    \bigskip
    
    For each $\xi < \kappa$, we may choose a pair $(\alpha_\xi,p_\xi)$ that
    is in the name $\dot y_\xi$ and such that $\bar p$
    is compatible with $ p_\xi$.   
     That is, if $\bar p\cup p_\xi\in G$, then $\val_{G}(\dot y_\xi)$
    is equal to $\alpha_\xi$ and is in $\omega^{(<\kappa)}$.
     A minor modification of Claim \ref{pressdown}
    show that there is a $\bar q \leq \bar p$ and a set $U_0\in \mathcal U$
    such that, for all $\lambda<\mu$ with both from $U_0$, 
    $\bar q = (\bar p\cup p_\lambda)\restriction\lambda$
    and $\dom(p_\lambda)\subset \mu$. 
     
    \bigskip
    
    We define a new nice name $\dot Z = \{ (\alpha_\lambda,\bar p\cup p_\lambda) : \lambda\in U_0\}$
    and observe that, by Claim \ref{Delta}, if $\bar p\in G$ (for a $\Fn(\kappa,2)$-generic filter $G$)
    the sequence $\val_G(\dot Z)$ is a $\kappa$-sequence that converges to $z$.
    Let $C_0$ be a cub in $\kappa$ satisfying that, for each $\mu\in C_0$,
     $U_0\cap \mu$ is cofinal in $\mu$. Since $z$ is forced to not be an element of $\omega^{(<\kappa)}$,
      we may choose, for each $\mu\in C_0$, a nice name $\dot A_\mu\in \mathcal N_z$, 
      so that there is a $I_\mu\subset U_0\cap \mu$ and a nice name $\dot J_\mu$ 
      for a subset of $I_\mu$ satisfying that
      $\dot J_\mu$ is forced to be cofinal in $\mu$ and 
       $\alpha_\xi$ is not  $\dot A_\mu$ for all $\xi\in \dot J_\mu$. 
       We emphasize  that,
     in $V[G]$, $z$ is not in the closure of $\{ \alpha_\lambda : \lambda\in \dot J_\mu\}$.
       Recall
       that a condition $q\in \Fn(\kappa,2)$ forces that an ordinal $\xi\in \dot J_\mu$
       if there is some pair $(\xi,p)\in \dot J_\mu$ with $q\leq p$. 
       Similarly, $q$ forces that $\xi\in \dot Z$ if $\xi\in U_0$ and $q\leq \bar p\cup p_\xi$.
       \bigskip
       
       For each $\lambda\in U_0$ and $\lambda<\mu\in C_0$, choose a pair $(\xi(\lambda,\mu), q(\lambda,\mu))$
       so that
       \begin{enumerate}
       \item $\lambda \leq \xi(\lambda,\mu) \in U_0\cap \mu$,
       \item $q(\lambda,\mu)\in \Fn(\kappa,2)$ and $q(\lambda,\mu)$ forces $\xi(\lambda,\mu)\in \dot Z$,
         \item $q(\lambda,\mu)$ forces that $\xi(\lambda,\mu)\in \dot J_\mu$. 
       \end{enumerate}
              There is a cub $C_1$ satisfying
        that, for all $\zeta\in C_1$,
        $\{ q(\lambda,\mu) : \mu<\zeta \ \mbox{and}\ \lambda\in U_0\cap \mu\}$
        is a subset of $\Fn(\zeta,2)$. 
            Choose any $U_1\in \mathcal U$ so that $U_1\subset C_1$
       and every element of   $ U_1$ 
 is a regular cardinal.
 
 Applying Claim \ref{smallDelta}, we may choose, for each $\mu\in U_1$,
  a strictly increasing function $h_\mu: \mu \mapsto U_0\cap \mu$ so that 
     the sequence $\{ q(h_\mu(\alpha), \mu) : \alpha  \in \mu\}$ is
     a $\Delta$-system. For each $\mu\in U_1$, let $\tilde q_\mu$ denote
     the root of this $\Delta$-system,   and, by possibly shrinking $U_1$,
      we can also assume that the sequence $\{ \tilde q_\mu : \mu\in U_1\}$ 
      is a $\Delta$-system.

 Now we come to a  stronger version of Claim \ref{Delta}.
      
        \begin{claim} There is a strictly increasing function $h$ from $\kappa$
        into $U_0$  and a sequence $\{ (\bar\xi_{\alpha}, \bar q_{\alpha}) : \alpha<\kappa\}$
         such that
          for all $\omega<\delta<\kappa$, there is a  set $U_\delta\in \mathcal U$
          so that, for all $\mu\in U_\delta$,
    $$ \{(h(\alpha),\bar\xi_{\alpha}, \bar q_{\alpha}) : \alpha<\delta\} 
            = \{ (h_\mu(\alpha),\xi(h_\mu(\alpha) ,\mu), 
            q(h_\mu(\alpha),\mu) \restriction\mu)   : \alpha<\delta\}~.$$
                    \end{claim}
        
        \bgroup
        
        \def\proofname{Proof of Claim:\/}
        
  \begin{proof}
   Choose any enumeration $e:\kappa \mapsto \kappa\times \kappa\times\Fn(\kappa,2)$.
    Choose a cub
   $C_e\subset\kappa$ so that for all $\gamma < \delta\in C_e$,
    $e(\gamma)\in\delta\times \delta\times\Fn(\delta,2)$.
   For each $\alpha\in \kappa $, the function sending $\mu$ to the ordinal $\gamma_\mu$
   satisfying that  $(h_\mu(\alpha),\xi(h_\mu(\alpha)) , q(h_\mu(\alpha),\mu)\restriction\mu) = 
   e(\gamma_\mu)$ is regressive on a set in $\mathcal U$. Therefore there is a 
$  W_\alpha\in \mathcal U$ 
 and a value $g(\alpha)\in \kappa$ so that  $$(h_\mu(\alpha) ,\xi(h_\mu(\alpha),\mu),
  q(h_\mu(\alpha) ,\mu)\restriction\mu) = 
   e(g(\alpha))\ \ \mbox{ for all}\ \  \mu\in W_\alpha~.$$
   Let  
   $(h(\alpha),\bar\xi_{\alpha}, \bar q_{\alpha})$ be defined so as to equal 
    $e(g(\alpha))$.
   Now, for each $\delta<\kappa$, the set $U_\delta = \bigcap_{\alpha<\delta}W_\alpha$
   is an element of $\mathcal U$ and, after some straightforward unravelling, we have
   that for all $\mu\in U_\delta$, 
   $$ \{(h(\alpha),\bar\xi_{\alpha}, \bar q_{\alpha}) : \alpha<\delta\} 
            = \{ (h_\mu(\alpha),\xi(h_\mu(\alpha) ,\mu), 
            q(h_\mu(\alpha),\mu) \restriction\mu)   : \alpha<\delta\}~~.$$
    \end{proof}
  \egroup

 The family $\{ \bar q_\alpha : \alpha \in \kappa\}$ is a $\Delta$-system of
 $\Fn(\kappa,2)$ since for each $\delta<\kappa$, there is a $\mu\in U_1$ such
 that
   the initial segment $\{ \bar q_\alpha : \alpha < \delta\}$  is a subset
   of the $\Delta$-system $\{ 
   q(h_\mu(\alpha),\mu) \restriction\mu  : \alpha<\delta\}~~.$
   In addition, each $\bar q_\alpha$ forces that $\bar \xi_\alpha$ is an
   element of the set $\{ \dot y_\zeta : \alpha\leq \zeta <\kappa\}$. 
   
   We briefly pass to the extension $V[G]$ and note that, by Claim \ref{Delta},
   the set $J = \{ \alpha : \bar q_\alpha\in G\}$ is cofinal in $\kappa$. 
   Since the sequence $\{ \val_G(\dot y_\zeta) : \zeta < \kappa\}$ converges
   to $z$, and by countable tightness, there is a countable subset $J_1$
   of $J$ such that $\{ \bar\xi_\alpha : \alpha\in J_1\}$
  has $z$ in its closure. 
   Now we apply   Proposition \ref{limits} to the point $z$
   and the countable set $A=\{ \bar\xi_\alpha : \alpha\in J_1\}$ and
   we choose   a countable set $J_2$ of $\theta$ (i.e. the space $X$) 
   so that $J_2 \subset A^{(1)}\setminus A$ and $z$ is in the closure
   of $J_2$. 
   
   Now we return to $V$ and we choose countable
    $I_1\subset \kappa$ and countable $I_2\subset\theta $ so that,
    in $V[G]$, $J_1\subset I_1$ and $J_2\subset I_2$.  Fix a nice
    name, $\dot J_2$, for the subset $J_2$ of $I_2$ 
    and notice that the
    name $\dot J_1$ for $J_1$ is simply the set
     $\{ (\alpha , \bar q_\alpha) : \alpha \in I_1\}$.
       Next we want to choose
      sufficiently many nice names for sequences contained in
      $\{\bar\xi_\alpha : \alpha\in J_1\}$ 
      that converge to points of $J_2$. We can do this by a simple
      examination of the name $\dot J_2$. For each ordinal $x\in I_2$
      and condition $p\in \Fn(\kappa,2)$ such that $(x,p)$
      is an element of the name $\dot J_2$ (thus $p\Vdash x\in J_2$)
      choose a nice name $\dot S(x,p)$ of a subset of $I_1$ such
      that $p$ forces that $ \dot S(x,p)$ is a subset of $J_1$ 
      and that $\{ \bar\xi_\alpha : \alpha \in \dot S(x,p)\}$
      converges to $x$. Certainly
      we have only chosen countably many nice names of the form
       $\dot S(x,p)$.  Fix any $\delta < \kappa$ so that 
       \begin{enumerate}
       \item $I_1\cup I_2\subset \delta$,
       \item $\dot J_2$ is a nice $\Fn(\delta,2)$-name,
       \item for all $x\in I_2$ and $(x,p)\in \dot J_2$, $\dot S(x,p)$ is
        a nice $\Fn(\delta,2)$-name,
        \end{enumerate}
        and choose any $\mu\in U_\delta$ such that $\tilde q_\mu\in G$. 
        \bigskip
        
        We are ready for our final contradiction (to the assumption that
        $X$ has cardinality greater than $\kappa$). The contradiction
        will be that $z$ is not in the closure of the
        set $A_\mu = \{ \xi(h_\mu(\alpha),\mu) : \alpha \in I_1
         \ \mbox{and}\ h_\mu(\alpha)\in \val_G(\dot J_\mu)\}$, but 
         $z$ is in the closure of $ J_2$. We will show that 
         $J_2$ is contained in the closure of $A_\mu $. To show
         this, it suffices to prove that for each $x\in I_2$
         and $p\in G$ such that $(x,p)\in \dot J_2$, 
         the sequence $\{\bar\xi_\alpha : \alpha\in \val_G(\dot S(x,p))\} $ meets
          $A_\mu$ in an infinite set. Fix any $x\in I_2$
          and $p$ so that $(x,p)\in \dot J_2$. Let $H$ be any finite
          subset of $I_1$ and we will show that there is an
           $\alpha\in \val_G(\dot S(x,p))\setminus H$ such that $\bar\xi_\alpha
           \in A_\mu$.
           To do so, we will work in $V$ and have to use the genericity
           of $G$. Following \cite{Kunen}*{VII}, it suffices to show
           that if $r$ is any element of $G$, there is an $\alpha\in I_1\setminus H$
           and an extension $r_\alpha \in \Fn(\kappa,2)$ so that, with $\bar\xi_\alpha$
            $r_\alpha$ forces that $\alpha\in \dot S(x,p)$ and 
              $r_\alpha$ forces that $h_\mu(\alpha)\in \dot J_\mu$. 
This is the same as proving
            that the set of conditions of the form $r_\alpha$
            is dense below the condition $p$. This ensures that $G$ will include
            one such $r_\alpha$ and that $\bar\xi_\alpha$ will be the desired 
            member of $A_\mu$ with $\alpha\in \dot S(x,p)$.
            
          Now we use that $\{ q(h_\mu(\alpha),\mu) : \alpha \in I_1\setminus H\}$
          is a $\Delta$-system. 
            By possibly increasing the size of $H$ we can assume that 
            $\dom(r)\cup\dom(\tilde q_\mu)\cup
            \dom(p)$ is disjoint from  $\dom(q(h_\mu(\alpha), \mu))\setminus
            \dom(\tilde q_\mu)$ for all $\alpha\in I_1\setminus H$.
           Next we simply note that, since $\{ \bar\xi_\alpha : \alpha \in \val_G(
           \dot S(x,p))\}$
            converges to $x$, there an $\alpha\in I_1\setminus H$
           and a condition $\bar r_\alpha\in G$ so that $(\alpha,\bar r_\alpha)$ is an 
           element of the name $\dot S(x,p)$. Recalling that $p$
           forces that $\dot S(x,p)$ is a subset of $J_1$, it follows also
           that  $\bar r_\alpha \leq \bar q_\alpha$. 
           By the choice of $\delta$,
            $\bar r_\alpha\in \Fn(\delta,2)$  and  
            $\bar r_\alpha\leq q(h_\mu(\alpha),\mu)\restriction\mu$.
            Finally, it follows that with $r_\alpha=\bar r_\alpha\cup 
             q(h_\mu(\alpha),\mu)$ we have completed the proof.
\end{proof}

\section{Pseudoradial spaces in models of  $\cee \leq \aleph_2$}

In this section we prove that if $\cee$ is most $\aleph_2$, then there
are separable regular pseudoradial spaces of cardinality $2^{\cee}$. 
These examples are also 0-dimensional with a countable dense set
of isolated points. 
We first prove that CH implies the stronger result that there are 
such space that are compact.

\begin{theorem}[CH] There is  a compactification 
of $\omega$ that is pseudoradial and has cardinality
 $2^{\aleph_1}$.
 \end{theorem}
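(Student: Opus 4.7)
The plan is to construct, under $\CH$, a $0$-dimensional compactification $K$ of $\omega$ that is sequentially compact and has cardinality $2^{\aleph_1}$. Since $\CH$ gives $\cee=\aleph_1\le\aleph_2$, the theorem of \cite{JSpseudo} (asserting that under $\cee\le\aleph_2$ every compact sequentially compact space is pseudoradial) then immediately yields that $K$ is pseudoradial, completing the proof.

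I would realize $K$ as the Stone space of a Boolean subalgebra $\mathcal B\subseteq P(\omega)$ of cardinality $\aleph_1$ containing every finite subset of $\omega$; then $\omega$ sits in $K$ as a dense set of isolated points. The algebra $\mathcal B$ is produced by a transfinite recursion of length $\omega_1$ with partial algebras $\mathcal B_\alpha$, $\alpha<\omega_1$. Two tasks are interleaved along the recursion. For \emph{cardinality}, fix in advance an independent family $\{I_\xi:\xi<\omega_1\}\subseteq P(\omega)$ (available because $\cee=\aleph_1$) and arrange that each $I_\xi$ enters $\mathcal B$ at some stage; the $2^{\aleph_1}$ consistent $\{0,1\}$-labellings of this family each extend to a distinct ultrafilter on $\mathcal B$, so $|K|=2^{\aleph_1}$. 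For \emph{sequential compactness}, use $\CH$ to enumerate in length $\omega_1$ all pairs $(\alpha,\sigma)$ with $\alpha<\omega_1$ and $\sigma$ an $\omega$-sequence of ultrafilters on the countable partial algebra $\mathcal B_\alpha$. At the stage handling $(\alpha,\sigma)$ one diagonalizes through the countably many generators of $\mathcal B_\alpha$ to obtain an infinite $I\subseteq\omega$ on which $(\sigma_n)_{n\in I}$ is coordinate-wise eventually constant, and then adds a set to $\mathcal B$ witnessing convergence to the corresponding limit ultrafilter.

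Sequential compactness of $K$ follows because any $\omega$-sequence of ultrafilters in $K$ is determined by its restriction to a countable subalgebra of $\mathcal B$; by regularity of $\omega_1$ that subalgebra lies inside some $\mathcal B_\alpha$ with $\alpha<\omega_1$, and the witness arranged at the stage for the corresponding pair $(\alpha,\sigma)$ then supplies a convergent subsequence in $K$.

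The main obstacle is the tension between the two tasks: an independent family of size $\aleph_1$ tends to push the Stone space toward the Cantor cube $2^{\aleph_1}$, which famously fails to be sequentially compact under $\CH$ (where the splitting number $\mathfrak{s}=\aleph_1$). The resolution is that any single $\omega$-sequence of ultrafilters interacts with only countably many of the $I_\xi$ at a time, so the $\aleph_1$-sized splitting-family obstruction does not apply to an individual sequence; a countable diagonalization always succeeds. Verifying that the witnesses added at successive stages can be chosen compatibly, neither destroying independence of $\{I_\xi:\xi<\omega_1\}$ nor unravelling convergence arranged at earlier stages, is the delicate bookkeeping at the heart of the proof.
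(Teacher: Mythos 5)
Your overall frame — prove sequential compactness and cardinality $2^{\aleph_1}$, then invoke the Shapirovskii/Juh\'asz--Szentmikl\'ossy result that under $\mathfrak c\leq\aleph_2$ compact sequentially compact spaces are pseudoradial — is exactly the paper's reduction, and the independent-family count for $|K|=2^{\aleph_1}$ is fine. The gap is in your mechanism for sequential compactness. A sequence of ultrafilters on the final algebra $\mathcal B$ is \emph{not} determined by its restriction to a countable subalgebra: precisely because $\mathcal B$ contains an independent family of size $\aleph_1$, its Stone space has $2^{\aleph_1}$ points, hence $2^{\aleph_1}$ many $\omega$-sequences in the remainder, while under CH there are only $\aleph_1$ pairs $(\alpha,\sigma)$ with $\sigma$ an $\omega$-sequence of ultrafilters on the countable algebra $\mathcal B_\alpha$. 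So the $\omega_1$-length bookkeeping cannot catch the sequences that matter, and two ultrafilters agreeing on every $\mathcal B_\alpha$ you have processed can behave oppositely on sets added later. For the same reason the ``witness'' added at the stage for $(\alpha,\sigma)$ can only secure eventual decision of the chosen subsequence on $\mathcal B_\alpha$; the values of free ultrafilters on later generators are not functions of their traces, so no stage-wise commitment can guarantee eventual decision on all of $\mathcal B$. The hard case is thus sequences of points of the remainder, not sequences from $\omega$, and your sketch has no mechanism for it; even for sequences from $\omega$, arranging that every later-added independent set decides every previously committed subsequence while preserving independence needs an explicit argument (one must, for instance, avoid committed subsequences that are almost equal to Boolean combinations of the $I_\xi$), which is exactly the ``delicate bookkeeping'' you defer and which is the heart of the matter.

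The paper avoids catching sequences altogether by making the remainder sequentially compact for structural, order-theoretic reasons: under CH it takes an $\eta_1$-set $X$ of cardinality $\aleph_1$, forms its Dedekind completion $\mathbb D$ (weight $\aleph_1$, no points of countable character, cardinality $2^{\aleph_1}$), doubles the points of countable one-sided character to get a compact linearly ordered space $K$ that is radial and in which every infinite set has a subsequence converging to a point of countable character, and then uses CH (Parovichenko-style) to realize $K$ as the remainder of a compactification $\gamma\omega$ of $\omega$; after that only sequences from $\omega$ need a short ad hoc check. If you wish to keep your Boolean-algebra formulation, you would need to build comparable structure into $\mathcal B$ (for example an interval algebra of such an order, so that monotonicity supplies convergent subsequences), rather than rely on an enumeration argument that is defeated by the $2^{\aleph_1}$-versus-$\aleph_1$ counting.
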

 
 \begin{proof} Let $X$ be an $\eta_1$-set of cardinality $\aleph_1$
 and let $\prec$ denote the corresponding (strict) linear ordering on $X$.  
 Recall that 
 an $\eta_1$-set has the property that if $A$ and $B$ are countable subsets
 of $X$ and that $a\prec b$ for all $a\in A$ and $b\in B$, then there
 is an $x\in X$ with $a\prec x \prec b$ for all $a\in A$ and $b\in B$.
 Next, let $\mathbb D$ denote the standard Dedekind completion of 
 $X$. Since $X$ is dense in $\mathbb D$ it follows that $\mathbb D$
 has weight $\aleph_1$ and  no points of countable character. It also
 follows then that 
 $\mathbb D$ has cardinality $2^{\aleph_1}$. Next, let $K$
 be the space obtained by doubling every point of $\mathbb D$
 that has   countable one-sided character. Again, using that $X$ is 
 dense, it follows that $K$ has weight $\aleph_1$. 
 Since $K$ is a linearly ordered space, it follows that $K$ is radial.
 We observe
 that $K$ has the property that every countably infinite set
 has subsequence converging to a point of countable character.

 Finally, it follows from  CH that there is a compactification
 $\gamma \omega$ that has remainder 
 $K$. This is the requisite example. It remains only   
 to prove that $\gamma\omega$ is sequentially compact.
Since $K$ is sequentially compact, it suffices to prove
that every infinite subset $A$ of $\omega$ has a converging
subsequence. If $A$ has only finitely many accumulation points
then this is clear. On the other hand, if $A$ has
infinitely many accumulation points, 
one of them  will be a point of countable character. 
                   \end{proof}

Now we turn to the proof in the
 case that  $\mathfrak c=\aleph_2$.
 Before constructing the example  we establish  an instructive
 constraint on the nature 
of possible examples.  This is based on the paper \cite{BaumWeese}
concerning partitioner algebras. A similar application in \cite{Dowlarge}
established that it was consistent that compact separable spaces of cardinality
greater than $\cee$ were not sequentially compact.  The proof is similar
to the proof of Theorem \ref{mble}.

\begin{theorem} It is consistent with $\mathfrak c=\aleph_2$
that\label{pseudo}
every pseudocompact pseudoradial regular space 
that contains a countable dense discrete space has cardinality
at most $\cee$.  Moreover,
if $V$ is a model of GCH and $\kappa>\aleph_1$ is a regular cardinal,
 then in the forcing extension by $\Fn(\kappa,2)$ every pseudocompact
 pseudoradial regular space with a countable dense discrete subset has
 cardinality at most $\kappa$.
\end{theorem}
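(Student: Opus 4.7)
The plan is to follow the proof of Theorem~\ref{mble} closely, making two substitutions. First, the normal $\kappa$-complete ultrafilter on the measurable cardinal is replaced by Fodor's Lemma and the $\Delta$-system lemma on the regular cardinal $\kappa>\aleph_1$. Second, the role played by countable tightness in Theorem~\ref{mble} is taken over by two facts: the dense set $\omega$ is itself countable, and pseudocompactness together with the discreteness of $\omega$ forces every infinite $A\subset\omega$ to have a limit point in $X$. I work in $V[G]$, where $G$ is $\Fn(\kappa,2)$-generic over $V\models\mathrm{GCH}$, and suppose for contradiction that $X$ is pseudocompact, pseudoradial, regular, with $\omega\subset X$ countable dense discrete, and $|X|>\kappa$.

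In $V[G]$ we have $\cee=\kappa$, and the set $\omega^{(1)}$ of limit points of convergent $\omega$-sequences from $\omega$ has cardinality at most $\cee$; hence there is $z\in X\setminus\omega^{(1)}$. Density of $\omega$ makes $z$ a limit point of $\omega$, so Proposition~\ref{limits} gives $z\in\overline{\omega^{(1)}\setminus\omega}$, and pseudoradiality then forces $\omega^{(1)}$ to fail to be radially closed. This produces an uncountable regular cardinal $\lambda\le\cee$ and a $\lambda$-sequence $\{y_\xi:\xi<\lambda\}\subset\omega^{(1)}$ converging to a point which I relabel $z$. Following Theorem~\ref{mble}, in $V$ I choose nice names $\dot y_\xi$ with distinguished pairs $(\alpha_\xi,p_\xi)\in\dot y_\xi$ such that each $p_\xi$ is compatible with a fixed $\bar p\in G$ forcing the convergence. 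The analogue of Claim~\ref{pressdown} is obtained by iterating Fodor's Lemma on the regressive functions sending $\xi$ to the maximum, second maximum, and so on, of $\dom(p_\xi)\cap\xi$: after restricting first to the stationary set on which $|\dom(p_\xi)\cap\xi|$ is constant, one extracts a stationary $S\subset\lambda$ on which $\{p_\xi:\xi\in S\}$ is a $\Delta$-system with root $\bar q\le\bar p$, and a further intersection with a club ensures $\dom(p_\xi)\setminus\bar q\subset(\xi,\mu)$ for $\xi<\mu$ in $S$.

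From here, one defines $\dot Z$, selects nice names $\dot A_\mu\in\mathcal N_z$ and cofinal $\dot J_\mu\subset\mu$ for $\mu$ in a club of $S$, together with pairs $(\xi(\lambda,\mu),q(\lambda,\mu))$ forcing $\xi(\lambda,\mu)\in\dot Z\cap\dot J_\mu$; a second application of Fodor plus Claim~\ref{smallDelta} yields the uniform $\Delta$-system $\{\bar q_\alpha:\alpha<\lambda\}$ that is the analogue of the unlabelled strengthened claim in the measurable proof. For the endgame, Theorem~\ref{mble} uses countable tightness twice: once to extract a countable $J_1\subset J$ with $z\in\overline{\{\bar\xi_\alpha:\alpha\in J_1\}}$, and once (via Proposition~\ref{limits}) to pass to a countable $J_2\subset A^{(1)}\setminus A$ with $z\in\overline{J_2}$. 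Here I would instead apply Proposition~\ref{limits} directly to $A=\omega$, which is already countable and has $z$ as a limit point; pseudocompactness guarantees that every infinite trace on $\omega$ of a neighborhood of a point of $\omega^{(1)}$ has a limit point in $X$, so that, in the partitioner-algebra spirit of \cite{BaumWeese}, the objects $\dot J_2$ and $\dot S(x,p)$ can be replaced by countable-support nice names in $V$ for subsets of $\omega$. The remainder of the genericity argument, showing that the set of conditions forcing both $\alpha\in\dot S(x,p)$ and $h_\mu(\alpha)\in\dot J_\mu$ is dense below $p$, then proceeds as in Theorem~\ref{mble}, producing the contradiction that $z$ is simultaneously in and outside the closure of $A_\mu$.

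The Fodor substitutions in the earlier phases are essentially bookkeeping, so the main obstacle is the substitution for countable tightness in this last phase. Without countable tightness there is no automatic way to extract a countable $J_2\subset\omega^{(1)}\setminus\omega$ with $z$ in its closure, and the genuine combinatorial content lies in using the countable support of Cohen names for subsets of $\omega$, together with the pseudocompactness of $X$, to manufacture the countable scaffolding that in the measurable proof came for free via countable tightness.
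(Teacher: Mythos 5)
Your plan stalls exactly at the step you yourself flag as "the main obstacle," and that step is not bookkeeping — it is the entire content of the theorem. The endgame of Theorem \ref{mble} needs countable tightness twice: to extract a countable $J_1\subset J$ with $z\in\overline{\{\bar\xi_\alpha:\alpha\in J_1\}}$, and then (through Proposition \ref{limits}) a countable $J_2\subset A^{(1)}\setminus A$ with $z\in\overline{J_2}$, which is what makes the countably many names $\dot S(x,p)$ and the final density argument possible. Saying that "$\dot J_2$ and $\dot S(x,p)$ can be replaced by countable-support nice names in $V$ for subsets of $\omega$" is not an argument: the countable supports of Cohen names give you countability of supports, not a countable subset of $X$ with $z$ in its closure, and without such a set the contradiction "$z$ is in and out of the closure of $A_\mu$" cannot even be formulated. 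There are also problems earlier than you acknowledge: pseudoradiality applied to $\omega^{(1)}$ only yields \emph{some} sequence converging out of it, possibly an $\omega$-sequence landing in $\omega^{(2)}$, so you do not get an uncountable regular $\lambda$ without passing to the full ${<}\kappa$-radial closure as in Theorem \ref{mble}; and the strengthened $\Delta$-system claim in that proof intersects $\delta$-many measure-one sets for every $\delta<\kappa$, which is exactly what $\kappa$-completeness buys and what Fodor plus the $\Delta$-system lemma do not (fewer than $\kappa$ stationary sets can have empty intersection).

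The paper's proof does not attempt to transplant the \ref{mble} endgame at all; it restructures the argument so that neither countable tightness nor a measure is needed. One fixes an elementary submodel $M\prec H(\theta^+)$ of size $\kappa$ closed under ${<}\kappa$-sequences, takes any $z\in\theta\setminus M$, and chooses $y_\lambda\in M\cap\theta$ realizing the first $\lambda$ names in $\mathcal S_z(1_P)$ and $\mathcal N_z(1_P)$. The fact that no cofinal subsequence of $\langle y_\alpha:\alpha<\lambda\rangle$ converges to $z$ (Claim \ref{radial}) produces, for each $\lambda$ of uncountable cofinality, a pair of names $\dot W_{\rho_\lambda},\dot W_{\zeta_\lambda}$ with the closure of the first inside the interior of the closure of the second; a single pressing-down gives a stationary set $E$ of \emph{pairwise isomorphic} such pairs, and copying one pair along the isomorphisms $\sigma_{\mu_0,\mu_\ell}$ yields an independent family of pairs $\{\dot V_\ell,\omega\setminus\dot U_\ell\}$. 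A block of Cohen coordinates $\dot L_\lambda$ then acts as a generic selector to build an infinite $\dot S\subset\omega$ such that every infinite subset of $S$ meets both members of a pair with disjoint closures, so $S$ contains no converging sequence; by pseudoradiality $S$ is closed, and an infinite closed set of isolated points contradicts pseudocompactness. Thus pseudocompactness enters only at the very last step, the contradiction has a completely different shape from the one you aim for, and it is precisely this Baumgartner--Weese style reorganization (which you cite in spirit but do not carry out) that eliminates the need for both the measurable cardinal and countable tightness.
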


\begin{proof} Clearly it suffices to prove the second statement of the 
Theorem. Suppose that GCH holds and that $\kappa>\aleph_1$ is a regular
cardinal. Let $G$ be a $\Fn(\kappa,2)$-generic filter.
We assume that $\theta>\kappa$ is a cardinal and that we
have a $\Fn(\kappa,2)$-name for a topology on $X=\theta$ 
such that, in $V[G]$,
$\omega$ is open, dense, and discrete. Similar to Theorem
\ref{mble}, let, for each $\alpha<\lambda$ and $p\in \Fn(\kappa,2)$,
 $\mathcal N_\alpha(p)$ be the set of all nice names of subsets of $\omega$
 that are forced by $p$ to have $\alpha$ in the interior of their closure. 
 Also let $\mathcal S_\alpha(p)$ be the set of all nice names of subsets
 of $\omega$ that are forced by $p$
 to simply have $\alpha$ in their closure. 
 To avoid confusion, we will let $1_P$ denote the maximal element
  $\emptyset$ of $\Fn(\kappa,2)$.

Since $\kappa$ is a regular cardinal, and we are assuming GCH, we 
may fix an elementary submodel $M$ of $H(\theta^+)$ of
cardinality $\kappa $ such that $\{\{\mathcal N_\alpha(1_P) ,
 \mathcal S_\alpha(1_P)\} :
\alpha\in \theta\}$  
is an element of $M$, and every subset of $M$ of cardinality less than
$\kappa$ 
is an element of $M$.
Let  $z$ be any element of $\theta\setminus M$. In this proof we will choose a
 $\kappa$-sequence $\{ y_\xi : \xi <\kappa\} \subset M\cap\theta $ 
 satisfying
 that, in $V[G]$, 
 the sequence $\{  y_\xi : \xi<\kappa\}$ converges to $z$. 
Fix  enumerations $\{ \dot S_\beta : \beta < \kappa \}$ of
 $\mathcal S_z(1_P)$
 and $\{ \dot W_\beta : \beta < \kappa\}$ of $\mathcal N_z(1_P)$.
 Fix any $\lambda<\kappa$. By assumption,
 the sequence $\{ \dot S_\beta : \beta <\lambda\}$
 and $\{\dot W_\beta : \beta < \lambda\}$ are elements of $M$. 
Since 
$$H(\theta^+)\models (\exists z\in\theta)( \{ \dot S_\beta : \beta < \lambda\}
\subset \mathcal S_z(1_P) \ \mbox{and}\ 
 \{ \dot W_\beta : \beta < \lambda\}
\subset \mathcal N_z(1_P) )$$ 
there is a $y_\lambda\in M\cap \theta$ satisfying
that $\{ \dot S_\beta : \beta < \lambda\}\subset \mathcal S_{y_\lambda}(1_P)$
and $ \{ \dot W_\beta : \beta < \lambda\}
\subset \mathcal N_{y_\lambda}(1_P)$. 

\medskip

It is easily verified that $\{ y_\lambda : \lambda < \kappa \}$ converges
to $z$ in $V[G]$ but this is not actually needed in the remainder of the proof. 
Rather, 
we will prove that $\omega$
is forced to have an infinite subset $S$ that contains
no converging sequence.   If $X$ is pseudoradial, $S$ would be closed
and, since $\omega$ is open in $X$, this would imply that $X$ is not
pseudocompact.
 
For limit $\lambda <\kappa$ of uncountable cofinality,
no cofinal subsequence
of
 $\{ y_\alpha : \alpha <\lambda\}$ converges to $z$
since $z$ is not an element of $M$. We translate this
into the forcing language.

\begin{claim}
For each limit\label{radial}
 $\lambda<\kappa$ of uncountable cofinality,
there is a value $\zeta_\lambda <\kappa$
and $\dot W_{\zeta_\lambda}\in \mathcal N_z(1_P)$ 
satisfying that, for all $\beta <\lambda $ and $p\in \Fn(\kappa,2)$,
there are $\beta < \alpha < \lambda$ and $p_\alpha < p$ such
that the name $\omega\setminus \dot W_{\zeta_\lambda}$ is an element
of $\mathcal N_{y_\alpha}(p_\alpha)$. 
\end{claim}

The last statement in Claim \ref{radial} is
simply asserting that $p_\alpha$ forces that $y_\alpha$
is not in the closure of $\dot W_{\zeta_\lambda}$. We also
note that the statement of Claim \ref{radial} asserts
that, for each $\beta<\lambda$,
the set of $p_\alpha$ as in the statement is a dense
subset of $\Fn(\kappa,2)$.  For each $\lambda$, choose
another  ordinal $\rho_\lambda$ so that $1_P$ forces
that the closure of $\dot W_{\rho_\lambda}\in \mathcal N_z(1_P)$ is 
contained in the interior of the closure of 
 $\dot W_{\zeta_\lambda}$.

Let $\Lambda$ denote the set of $\lambda<\kappa$ of uncountable
cofinality.
 For all $\lambda\in \Lambda$, let
$\dot W_{\zeta_\lambda}$ be the name
 $ \{ (n,p(\lambda,n,k)) :
 n\in B_\lambda, k\in\omega\}$
 and let $\dot W_{\rho_\lambda}$ be the name
  $\{ (n,q(\lambda,n,k)) : n\in D_\lambda, k\in\omega\}$.
  Note that $D_\lambda\subset B_\lambda$ and we may assume that,
  for all $n\in D_\lambda$ and $k\in \omega$, there is a $j\in\omega$
  so that $q(\lambda,n,k)<p(\lambda,n,j)$ (since
   $q(\lambda,n,k)\Vdash n\in \dot W_{\zeta_\lambda}$).
Let $I_\lambda$ 
denote the union of the set
       $\bigcup \{ \dom(q(\lambda,n,k)) : n\in B_\lambda, k\in
\omega\}$.
       For any $p\in\Fn(\kappa,2)$ and partial injection $\sigma :
       \kappa\mapsto \kappa$ 
       such that $\dom(p)\subset\dom(\sigma)$, we let $\widehat\sigma(p)$ 
       be the condition with domain $\sigma(\dom(p))$ and satisfying that
        $\widehat{\sigma}(p)(\sigma(\alpha)) = p(\alpha)$ for all
       $\alpha\in \dom(p)$. 
       
Fix a cub $C\subset\kappa$ satisfying that
for all $\mu\in C$,
$\{\zeta_\lambda\}\cup 
I_\lambda\subset \mu$ for all $\lambda \in \Lambda\cap \mu$.
By the usual pressing down lemma,
       and some straightforward enumeration techniques as used in the proof
       of Theorem \ref{mble}, there  is a stationary set $E\subset
       \Lambda\cap C$
       satisfying that, for all $\lambda,\mu \in E$ with $\lambda<\mu$,
  there is an order preserving isomorphism $\sigma_{\lambda,\mu}$
           from $ I_\lambda$ to $ I_\mu$ so that: 
           \begin{enumerate}
       \item $B_\lambda = B_\mu$ and
 $I_\lambda\cap\lambda = I_\mu\cap \mu$,
       \item $\sigma_{\lambda,\mu}(I_\lambda\setminus \lambda) = I_\mu\setminus
 \mu$,

        \item for all $n\in B_\lambda$ and $k\in\omega $, $
  p(\mu,n,k) = \widehat{\sigma}_{\lambda,\mu}(p(\lambda,n,k))$,
  \item for all $n\in D_\lambda$ and $k\in \omega$, 
   $ q(\mu,n,k) = \widehat{\sigma}_{\lambda,\mu}(q(\lambda,n,k))$ .
          \end{enumerate}
           Let $\mu_0$ be any element of $E$.
           Next, using that $E$ is stationary  it follows
           that we can find  $\mu_0<\mu_1\in E$
so that
           every $\Fn(\mu_1,2)$-name in $\mathcal S_{z}(1_P)$
           is in the list $\{\dot S_\beta : \beta<\mu_1\}$.
           Additionally we can
ensure that, for every nice $\Fn(\mu_0,2)$-name,
 $\dot S$,
           for a subset of $\omega$, and every $p\in \Fn(\kappa,2)$
           such that $\dot S\in \mathcal S_z(p)$,
           then $\dot S\in \mathcal S_z(p\restriction\mu_1)$.
           This latter condition is a simple consequence of
           the fact that $\Fn(\kappa,2)$ is ccc.

Next, extend $\{\mu_0,\mu_1\}$ to 
 any strictly increasing sequence $\{ \mu_\ell :
\ell\in\omega\}\subset E_1$. 
Our next step is to prove that the family
$\{ \dot W_{\zeta_{\mu_\ell}} : \ell\in \omega\}$ is forced
to be an independent family, or rather that
 $\{ \{ \dot W_{\rho_{\mu_\ell}}, \omega\setminus \dot W_{\zeta_{\mu_\ell}}\}: 
 \ell \in\omega\}$ is forced to be an independent family of pairs.
We will use the fact
       $\Fn(\kappa,2)$ is isomorphic to $ \Fn(\mu_0,2)\times
       \Fn(\kappa\setminus\mu_0,2)$ 
       and so when  we pass to the extension $V[G_{\mu_0}]$ (where
       $G_{\mu_0} = G\cap  
       \Fn(\mu_0,2)$), we can  use the fact that $V[G]$ is equal to 
       $V[G_{\mu_0}][G\cap \Fn(\kappa\setminus \mu_0,2)]$.
       We are trying to minimize our appeal to advanced forcing
       techniques but we  need a minor one here.

       \begin{claim}
         For each\label{reflect}
 $S\in [\omega]^{\aleph_0}\cap V[G_{\mu_0}]$,
         if $V[G]\models z\in \mbox{cl}_X(S)$, then
         there is an $\beta<\mu_1$ such that
          $\dot S_{\beta}\in \mathcal S_z(1_P)$ is a
         $\Fn(\mu_1,2)$-name and
         $\val_{G}(\dot S_\beta)=S$.
       \end{claim}

       \bgroup

       \def\proofname{Proof of Claim:\/}

       \begin{proof} By definition, if $S\in V[G_{\mu_0}]\cap
         [\omega]^{\aleph_0}$ there is a $\Fn(\mu_0,2)$-name
         $\dot S$ such that $S = \val_{G_{\mu_0}}(\dot S)$.
         Now assume that, in $V[G]$, that $z$ is in the closure
         of $\dot S$. By the forcing theorem, there is a
         condition $p\in G$ that forces $z$ is
         in the closure of $\dot S$. Of course
         this means that $\dot S\in \mathcal S_z(p)$.
         By the assumption on $\mu_1$, we can assume that
         $p\in \Fn(\mu_1,2)$. It is a routine exercise
         to prove that there is a nice $\Fn(\mu_1,2)$-name
         $\dot T$ such that $p\Vdash \dot T = \dot S$
         and,  all $q\in\Fn(\mu_1,2)$ that are incomparable
         with $p$ force that $\dot T = \omega$.
         It follows now, from the assumption on $\mu_1$,
         that $\dot T$ is in the list
          $\{ \dot S_\alpha : \alpha < \mu_1\}$. 
       \end{proof}

       \egroup

       Let $B=\{n\in B_{\mu_0} : (\exists k) p(\mu_0,n,k)\restriction
       \mu_0 \in G_{\mu_0}\}$ and
       $D=\{n\in D_{\mu_0} : (\exists k) q(\mu_0,n,k)\restriction
       \mu_0 \in G_{\mu_0}\}$.
       Let $I$ denote the countable set $I_{\mu_0}\setminus \mu_0$
       and let, for each $\ell<\omega$, $\dot U_\ell$ denote
       the canonical $\Fn(I_{\mu_\ell}\setminus \mu_\ell, 2)$-name
       in $V[G_{\mu_0}]$ for the set $\dot W_{\zeta_{\mu_\ell}}$
       and let $\dot V_\ell$ denote the canonical 
       $\Fn(I_{\mu_\ell}\setminus \mu_\ell, 2)$-name
       in $V[G_{\mu_0}]$ for the set $\dot W_{\rho_{\mu_\ell}}$.
       More precisely, for each $n\in B$ and $k\in \omega$,
       $$(n,p(\mu_\ell,n,k)\restriction I) \in \dot U_0
\ \ \mbox{if and only if}\ \ p(\mu_\ell,n,k)\restriction\mu_0\in
       G_{\mu_0}.$$
 For all $\ell>0$,
       $\dot U_\ell = \{(n,\widehat\sigma_{\mu_0,\mu_\ell}(p)) :
 (n,p)\in \dot U_0\}$.  For convenience, let $\sigma_{\mu_0,\mu_0}$ denote
 the identy function on $I$ and
  recall that the sets
  $\{ I_{\mu_\ell}\setminus \mu_\ell = \sigma_{\mu_0,\mu_\ell}(I)
   : \ell\in\omega\}$ are pairwise disjoint.
The definition of $\dot V_\ell$ is defined similarly.

 For each $p\in \Fn(I,2)$, let 
$T_p = \{ n\in B : p\Vdash
 n\in \dot U_0\}$
 and let 
$V_p = \{ n\in D : (\exists q < p ) ~q\Vdash n\in \dot V_0\}$.
 We note that $T_p$ and $V_p$ are
 elements of
 $V[G_{\mu_0}]$.

\begin{claim}
For each  $p\in \Fn(I,2)$,
 $z$ is not in the closure, in $V[G]$,
of $T_p$.
Similarly, $z$ is in the interior of the closure
of $V_p$.
\end{claim}

       \bgroup

       \def\proofname{Proof of Claim:\/}

       \begin{proof}
 Assume that the first statement fails  for some $p\in \Fn(I,2)$.
 By Claim \ref{reflect}, there is an $\beta < \mu_1$ such that
  $T_p = \val_G(\dot S_\beta)$. 
 The family $\{ \sigma_{\mu_0,\mu_\ell}(p) : 1\leq \ell\in\omega\}$
 is a $\Delta$-system (with empty root) and so there is
 an $\ell>0$ such that $\sigma_{\mu_0,\mu_\ell}(p)\in G$.
 By Claim \ref{radial}, there is an $\alpha$ and a condition
 $p_\alpha<\sigma_{\mu_0,\mu_\ell}(p)$ such that
 $p_\alpha\restriction \mu_0\in G_{\mu_0}$,
  $ \beta  < \alpha <\mu_\ell$ and $p_\alpha\Vdash \omega\setminus
 \dot W_\ell\in \mathcal N_{y_\alpha}(p_\alpha)$.
 This is equivalent to the assertion that $p_\alpha$
 forces that $y_{\alpha}$ is not in the closure of $\dot W_\ell$.
 On
 the other hand, we have that $\dot S_\beta\in \mathcal
 S_{y_\alpha}(1_p)$.  This implies that $1_P$
 forces that $y_\beta$ is in
 the closure of $T_p$ and yet, in $V[G_{\mu_0}]$,
 $p_\alpha$ forces $T_p 
 \subset \dot U_\ell$ is disjoint from a neighborhood of
 $y_\alpha$.

 The second statement is proven similarly by simply noting
 that $\sigma_{\mu_0,\mu_\ell}(p)$ forces that $\dot V_\ell$
 is a subset of $V_p$.
\end{proof}

       \egroup

 Now we prove that $\{ \{\dot V_\ell, \omega\setminus
 \dot U_\ell\} : \ell\in\omega\}$ is forced
 to be an independent family.  Fix any pair of disjoint finite
 sets $L_0,L_1$ of $\omega$. Let $q\in \Fn(\kappa\setminus \mu_0,2)$
 be any condition. We produce a condition $\tilde q<q$ and an integer
 $n$ so that $\tilde q$ forces that $n\in \dot V_\ell$ for all
 $\ell\in L_0$ and $\tilde q$ forces that $n\notin \dot U_\ell$
 for all $\ell\in L_1$. 
 For each $\ell\in L_0\cup L_1$, let $p_\ell\in \Fn(I,2)$
 be chosen so that $\sigma_{\mu_0,\mu_\ell}(p_\ell) = q\restriction
 I_{\mu_\ell}\setminus \mu_\ell$. Let $T = \bigcup\{ T_{p_\ell} : \ell\in
 L_1\}$
 and $U  = \bigcap \{V_{p_\ell} : \ell\in L_0\}$. 
 Choose  any integer $n\in U\setminus T$.
 For each $\ell\in L_0$,  $n\in U_{p_0}$
 and so
we may choose $q_\ell<p_\ell$
 in $\Fn(I,2)$, 
 such that $q_\ell\Vdash n\in \dot V_0$.
 For each $\ell\in L_1$, since $n\in T_{p_\ell}$, 
$p_\ell\not\Vdash n\in \dot U_0$, 
and so we may choose $q_\ell<p_\ell$
 in $\Fn(I,2)$ such that $q_\ell\Vdash n\notin \dot U_0$.
 It now follows that $\tilde q = q\cup \bigcup\{ \sigma_{\mu_0,\mu_\ell}(q_\ell) :
 \ell\in L_0\cup L_1\}$ is an element of $\Fn(\kappa\setminus
 \mu_0,2)$
 and that $\tilde q\Vdash n\in \dot V_\ell$ for all $\ell\in L_0$
 and $\tilde q\Vdash n\notin \dot U_\ell$ for all $\ell
\in L_1$ as required.

\bigskip

We reassure the reader that we are nearly at the end of the proof. Fix
any $\lambda < \kappa$ such that $\mu_\ell < \lambda$ for all
$\ell\in\omega$ and let $\dot L_\lambda$ be the canonical
subset of $\omega$ given by $n\in \dot L_\lambda$ if and only
if $\bigcup G (\lambda+n) = 1$. That is $\dot L_\lambda$
is the name $\{ (n, \{\langle \lambda+n,1\rangle\}
) : n\in\omega\}$. We have proven that the family
$\{ \dot V_\ell : \ell\in \dot L_\lambda \}\cup
\{ \omega\setminus \dot U_\ell : \ell \in \omega\setminus \dot
L_\lambda\}$ has the finite intersection property. 
We can define  $\dot S$ to be the sequence $\{ \dot n_\ell :
\ell\in \omega\}$ where $\dot n_\ell$ is the minimum element 
above $\dot n_{\ell-1}$ (for $\ell>0$)
of the set $$\bigcap\{ \dot V_j : j\leq \ell \ \mbox{and}\ j\in   \dot L_\lambda\}\setminus
 \bigcup \{ \dot U_j : j \leq \ell \ \mbox{and}\ j\notin \dot L_\lambda\}.$$
Each $\dot n_\ell$ exists was proven in the previous paragraph.
Now suppose that $\dot J$ is a nice $\Fn(\kappa\setminus\mu_0,2)$-name 
of an infinite subset of $\omega$. Choose any $\mu\in E$ satisfying that
 $\lambda+\omega < \mu$ and $\dot J$ is a $\Fn(\mu\setminus \mu_0,2)$-name.
 We complete the proof by showing that each of $\dot W_{\rho_\mu}$ and
  $\omega\setminus W_{\zeta_\mu}$ hit the sequence
   $\{ \dot n_\ell : \ell \in \dot J\}$ in an infinite set. Since $\dot W_{\rho_\mu}$
   and $\omega\setminus W_{\zeta_\mu}$ have disjoint closures, and $\dot J$
   was arbitrary, this shows that $\dot S$ does not have any limit points. 
   The proof is a density argument.   Analogous to the definitions of
   $\dot V_\ell,\dot W_\ell$ from $\dot W_{\rho_{\mu_\ell}}, \dot W_{\zeta_{\mu_\ell}}$,
   let us define $\dot V_\mu = \{ (n,\widehat\sigma_{\mu_0,\mu}(p)) : (n,p)\in \dot V_0\}$
   and $\dot U_\mu = \{ (n,\widehat\sigma_{\mu_0,\mu}(p)) : (n,p)\in \dot U_0\}$. 
It follows that $\dot V_\mu$ is the $\Fn(\kappa\setminus \mu_0,2)$-name for
 $\val_{G}(\dot W_{\rho_\mu})$ and $\dot U_\mu$ is the 
 $\Fn(\kappa\setminus \mu_0,2)$-name for
 $\val_{G}(\dot W_{\zeta_\mu})$.
   Let $p$ be an arbitrary element of 
$\Fn(\kappa\setminus \mu_0,2)$ and let $m$ be any integer. We will
   produce $q<p$ and a pair $m<\ell_1<\ell_2$ so that
    $q$ forces that $\{\ell_1,\ell_2\}\subset \dot J$, 
     $\dot n_{\ell_1}\in \dot V_{\mu}$ and $\dot n_{\ell_2}\notin \dot 
      U_\mu$.  By possibly increasing $m$, we can assume
      that $\dom(p)\cap [\lambda,\lambda+\omega)$ is
      contained in $[\lambda,\lambda+m]$. Similarly,
       we can assume that $\dom(p)\cap I_{\mu_\ell}\setminus \mu_\ell$
       is empty for all $\ell>m$. 
Choose $\bar p\in \Fn(I,2)$ so that $\sigma_{\mu_0,\mu}(\bar p)$
       is equal to $p\restriction I_\mu\setminus \mu$. Let $q_1
       \in\Fn(\mu\setminus\mu_0,2)$
       be any extension of
        $(p\restriction\mu) \cup \sigma_{\mu_0,\mu_{m+1}}(\bar p)$ 
       such that $q_1(\lambda{+}m{+}1)=1$, and
        there is an $\ell_1>m$ and an $n_{\ell_1}\in\omega$ 
       such that $q_1\Vdash \ell_1\in \dot J$ and $q_1\Vdash 
        \dot n_{\ell_1} = n_{\ell_1}$. Notice that $q_1\Vdash n_{\ell_1}
        \in \dot V_{m+1}$. Now let $\bar{p}_1\in \Fn(I,2)$ be chosen
        so that $\sigma_{\mu_0,\mu_{m+1}}(\bar{p}_1) = 
           q_1\restriction I_{\mu_{m+1}}\setminus \mu_{m+1}$
           and note that $\bar{p}_1<\bar p$. Let us note
           that $\sigma_{\mu_0,\mu}(\bar {p}_1)$ forces that $n_{\ell_1}
           \in \dot V_\mu$. Choose next a sufficiently large $m_2 > \ell_1$
           so that $\dom(q_1)\cap [\lambda,\lambda+\omega)
           \subset [\lambda, \lambda+m_2)$ and 
            $\dom(q_1) \cap I_{\mu_\ell}\setminus \mu_\ell$ is empty
            for all $\ell >m_2$. Choose $q_2\in \Fn(\mu\setminus \mu_0,2)$
            so that $q_2 < \sigma_{\mu_0,\mu_{m_2}}(\bar {p}_1)$,
             $q_2(\lambda+m_2)  = 0$, and, such that there are
             an $\ell_2>m_2$ and $n_{\ell_2}\in\omega$
             such that $q_2\Vdash \ell_2\in \dot J$ and $q_2\Vdash 
             \dot n_{\ell_2 } = n_{\ell_2}$. 
             Let $\bar{p}_2\in \Fn(I,2)$ such that $\sigma_{\mu_0,\mu_{m_2}}(\bar{p}_2)
              = q_2\restriction I_{\mu_{m_2}}\setminus \mu_{m_2}$.
Since $q_2\Vdash m_2\notin \dot L_\lambda$, 
we have that $q_2\Vdash n_{\ell_2}\in \omega\setminus \dot U_{{m_2}}$. 
Therefore $\bar{p}_2\Vdash n_{\ell_2}\notin \dot U_{0}$, 
 which implies that 
 $\sigma_{\mu_0,\mu}(\bar{p}_2)<\sigma_{\mu_0,\mu}(\bar{p}_1)$
  forces that $n_{\ell_2}\notin
    \dot U_{\mu}$ and $n_{\ell_1}\in \dot V_{\mu}$.                         
              The required condition $q$ is $q_2 \cup \sigma_{\mu_0,\mu}(\bar{p}_2)$.
We have that $\sigma_{\mu_0,\mu}(\bar {p}_2)$ forces that $n_{\ell_1}\in \dot V_\mu$
and that $n_{\ell_2}\notin \dot U_\mu$.        
\end{proof}

We do not know if we had to assume that $\omega$ was a discrete
subset in Theorem \ref{pseudo} so was raise this question.

\begin{question} Is it consistent that every separable pseudocompact
pseudoradial regular space has cardinality at most $\cee$?
\end{question}

Now we prove what may be the main result of the paper.

\begin{theorem} If $\mathfrak c\leq\aleph_2$, then there
  is a separable 0-dimensional pseudoradial space of cardinality
  greater than $\mathfrak c$.
\end{theorem}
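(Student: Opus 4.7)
The plan is to adapt the CH construction from the preceding theorem so that it works under the weaker assumption $\mathfrak{c} = \aleph_2$. In that proof, one used a doubled Dedekind completion $K$ of an $\eta_1$-set (giving $|K| = 2^{\aleph_1}$) and then appealed to CH to obtain a compactification of $\omega$ whose remainder is $K$. Without CH the existence of such a compactification is not automatic, so I would sacrifice compactness and build the topology on $\omega \sqcup K$ by hand, retaining only the pseudoradial property.

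First I would produce the ``target'' linearly ordered space $K$. Under CH the preceding theorem already delivers $|K| = 2^{\aleph_1} > \mathfrak{c}$. Under $\mathfrak{c} = \aleph_2$: if $2^{\aleph_1} > \aleph_2$ the same $K$ works; otherwise $2^{\aleph_1} = \aleph_2$, in which case an $\eta_2$-set of cardinality $\aleph_2$ exists, and I would iterate the construction one level up --- forming the Dedekind completion of the $\eta_2$-set and doubling points whose one-sided character is at most $\aleph_1$ --- to obtain a $0$-dimensional, linearly ordered (hence radial) space $K$ of cardinality $2^{\aleph_2} > \mathfrak{c}$ in which every infinite set contains a subsequence converging to a point of character at most $\aleph_1$.

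Second I would attach $\omega$ as a countable dense set of isolated points. Set $Y = \omega \sqcup K$, declare $\omega$ open and discrete, and for each $x \in K$ choose an order-theoretic clopen base $\mathcal{B}_x$ in $K$, together with a coherent assignment $V \mapsto U_V \subseteq \omega$ for $V \in \mathcal{B}_x$ (so $V_1 \subseteq V_2$ implies $U_{V_1} \subseteq^* U_{V_2}$) in such a way that the filter base $\{U_V : V \in \mathcal{B}_x\}$ realises $x$ as a limit of $\omega$. Basic neighborhoods of $x$ in $Y$ are then $\{x\} \cup (V \cap K) \cup U_V$. The family $\{U_V\}$ is built by transfinite recursion along a well-ordering of $K$ of length $|K|$, using that the character bound $\aleph_1 \leq \mathfrak{c}$ limits the combinatorial complexity of each step to something manageable inside $\omega$.

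The main obstacle is verifying that $Y$ is pseudoradial. For a supposed witness $y \in \overline{A} \setminus A$ with $y \in K$, the case that $y \in \overline{A \cap K}^K$ is handled by the radial structure of $K$. The delicate case is when $y$ is witnessed only by $A \cap \omega$: one must extract an $\aleph_0$- or $\aleph_1$-length subsequence of $A \cap \omega$ converging to $y$ in $Y$. This has to be arranged during the recursive construction of the $U_V$'s by diagonalizing against each potentially troublesome subset of $\omega$ at the appropriate stage, and the bookkeeping must simultaneously prevent spurious convergences in $\omega$ at points outside $K$ --- achieved by splitting $\omega$ into sufficiently independent pieces compatible with the order structure of $K$ at each level of the recursion.
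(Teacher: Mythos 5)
There is a genuine gap: your proposal is a plan whose two central steps are exactly the points where the whole difficulty of the theorem sits, and neither is carried out. First, the ``coherent assignment $V\mapsto U_V$'' that makes $\omega\sqcup K$ a topological space with $\omega$ dense is not a harmless bookkeeping device: coherence (if $W\subseteq V$ then $U_W\subseteq U_V$, plus near-disjointness for Hausdorffness) amounts to embedding a $\pi$-base of $K$ into $\mathcal P(\omega)/\mathrm{fin}$ so that every point of $K$ becomes a limit of a filter on $\omega$. That is a Parovi\v{c}enko-type statement; it is available in ZFC only for weight $\aleph_1$, and your main case uses the completion of an $\eta_2$-set, a linearly ordered space of weight $\aleph_2=\mathfrak c$ containing, e.g., well-ordered $\omega_2$-chains whose images would have to become $\subseteq^*$-increasing $\omega_2$-chains with prescribed coherence --- nothing close to a consequence of $\mathfrak c\le\aleph_2$. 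Second, and more fundamentally, pseudoradiality of the resulting space is not addressed: since $\omega$ is discrete and the space is Hausdorff, any transfinite sequence from a countable set that converges is essentially an $\omega$-sequence, so every $A\subseteq\omega$ whose closure meets $K$ must have its radial closure escape $A$ through $\omega$-limits; but your $K$ has points of character $\mathfrak c$ that cannot be $\omega$-limits of $\omega$ at all, so one needs an intermediate layer of points reachable from $\omega$ by $\omega$-sequences and then reachable onward by $\omega_1$-sequences, arranged simultaneously for all $\mathfrak c$ many subsets $A$ of $\omega$ while the recursion has length $2^{\mathfrak c}$. Saying this ``has to be arranged by diagonalizing against each potentially troublesome subset'' names the problem rather than solving it; indeed the paper's Theorem \ref{pseudo} shows that in Cohen models such convergence structure on a countable dense discrete set consistently cannot be arranged for pseudocompact spaces, so the arrangement genuinely requires a specific combinatorial mechanism.

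The paper's proof supplies exactly that mechanism and is quite different from your outline: instead of fixing a large ordered target $K$ and attaching $\omega$ afterwards, it builds a Boolean subalgebra of $\mathcal P(D)$ over the structured countable set $D=\bigcup_n (2^n)^{2^n}$ (Simon's independent linked families), with an independent family $\{A_t\}$ indexed by a subtree $T$ of $2^{\le\omega_2}$ branching only at levels of cofinality $\omega_1$, an almost disjoint family $\{B_L\}$ supporting Mr\'owka-style $\omega$-limits, and ultrafilters $\mathcal F_t$ determined by assignments $H_t$ that create a three-layer convergence structure ($X_1$: $\omega$-limits of $D$; $X_2$: $\omega$-limits of points outside $D$; $X_3$: $\omega_1$-limits along branches). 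Pseudoradiality is then verified with elementary submodel arguments (Claims \ref{x2} and \ref{x3}), and the case $2^{\aleph_1}>\aleph_2$ is handled by truncating the tree, not by replacing the $\eta_1$-set with an $\eta_2$-set. If you want to salvage your approach, you would have to replace the linear order $K$ by a structure designed so that the traces on $\omega$ of neighborhood filters are generated by an independent family with the special properties the paper alludes to; at that point you have essentially been led back to the paper's construction.
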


\begin{proof}
As we learned in our previous
results the space will necessarily contain a large independent family
of clopen sets, and in fact the family must have special properties
that will support the existence of many converging sequences.
We will use the ideas 
introduced by P. Simon
\cite{Petr} 
to facilitate the construction of such families
by using a countable set with special structure.
 For each $n\in\omega$, let $D_n = \left(2^n\right)^{2^n}$
Our countable dense set will be $D =
\bigcup \{ D_n : n\in \omega\}$.  

Our construction will be to define a Boolean subalgebra $\mathcal B$
of $\mathcal P(D)$
and to then choose a subspace of the space of ultrafilters, i.e. Stone space,
 of $\mathcal B$.   As usual, the finite subsets of $D$ will be members
 of $\mathcal B$ and 
 we identify the fixed ultrafilters of $\mathcal B$ with the points of $D$.

 We will define a subtree   of $2^{<\omega_2}$
 that has cardinality $\mathfrak c$ and has more than
 $\mathfrak c$ many cofinal branches. If $2^{\aleph_1}=\aleph_2$
 then this tree will necessarily have height $\omega_2$.
 We first present the proof in the case that
 $2^{\aleph_1}=\aleph_2$ and indicate at the end of
 the proof the minor modification needed for the case
 that  $2^{\aleph_1}>\aleph_2$.

We will let 
 $T$ denote the chosen subtree of $2^{\leq\omega_2}$ (including
 its cofinal branches)
 and use
  the elements of $T$ to enumerate the ultrafilters of $\mathcal B$
  that are chosen to be members of our space $X$. This approach
  ensures that our space is zero-dimensional and Hausdorff.
Thus $X$
  will, technically have as
  base set $D\cup \{\mathcal F_t : t\in T\}$ but when proving
  that $X$ is pseudoradial it will be convenient to 
 identify  $t\in T$ with the point $\mathcal F_t$ in $S(\mathcal B)$.  
  The structure of $T$ will be very
 helpful in understanding the convergence 
  structure of $X$. In particular, for many,
   but not all,  elements $t$ of $T$ that lie
  on limit levels of uncountable cofinality,
   the set of predecessors of $t$ will be a well-ordered sequence that
   converges to $t$. This gives some of the insight into the proof
   that $X$ is pseudoradial.  As mentioned in the first
   paragraph,
the structure of $D$ is chosen to allow
   us to more strategically define the sequences from $D$ that
   converge to points     of $X$. 
 
  For $t\in 2^{\leq\omega_2}$, let $o(t)$ denote
the domain or level of $t$. 
Let $S^2_0$ and $S^2_1$ denote the stationary
subsets of $\omega_2$ consisting of the cofinality $\omega$ and,
respectively,
$\omega_1$ limits.
Let $(S^2_1)'$ denote the set of all limits
of $S^2_1$ and let $S_0 = S^2_0\cap (S^2_1)'$ and
$S_1 = S^2_1\cap (S^2_1)'$. Let $T_0$ be the subtree
 of $2^{\leq\omega_2}$
where
$t\in T_0$ if and only if $t^{-1}(1) $ is a subset of
 $S_1$ (i.e. we are only branching at levels in $S_1$). 
 It should be clear that $T_0\cap 2^{\omega_2}$ has cardinality
  $2^{\aleph_2}$.  We will need to add some more nodes to our
  tree because we want to
have a large set of auxillary  successors to associate with each
$t\in T_0$ where $o(t)\in S_0$. Therefore $T$ will consist of
$T_0$ together with
all $t^\frown \sigma$ (concatentation) where
$t\in T_0$, $\sigma\in 2^{\leq\omega}$, and $o(t)\in S_0$.
For each $t\in T\setminus T_0$, let $\delta_t\in S_0$
and $\sigma_t\in 2^{\leq\omega}$ denote the values
such that $t = t\restriction\delta_t{}\frown \sigma_t$.
For $s,t\in T$, let $s\wedge t$ be the maximum element
of $T$ satisfying that $s\wedge t\leq s$ and
 $s\wedge t\leq s$. 

\medskip

Now we choose our special independent subfamily of
$\mathcal P(D)$ and a special family of converging sequences.
It will make the proof more readable
 to use the set $T\cap 2^{<\omega_2}$ to
enumerate the family.
 Let $\{ r_t : t\in T_0\cap 2^{<\omega_2}\}$
be any one-to-one enumeration of $2^\omega\setminus \{\vec0\}$
(where $\vec0$ is the constant zero function).
$$\mbox{For each}~ t\in T_0\cap 2^{<\omega_2},\ \ 
 A_t = \bigcup_n \{ d\in D_n
: d(r_t\restriction n) = r_t\restriction n\}~.$$ 
Let $\mathcal L$ denote the family of finite non-empty
chains of $T_0\cap 2^{<\omega_2}$. 
For each  $L\in\mathcal L$,
let
$B_L $ be the set of $d\in \bigcap\{ A_t : t\in L\}$ satisfying
that $d(\sigma)=\vec 0\restriction n$ 
providing $d\in D_n$ and $\sigma\in 2^n\setminus
\{r_t\restriction n:   t\in L\}$
(i.e. $d\in B_L\cap D_n$ if $d$ is the identity
on $\{ r_t\restriction n : t\in L\}$ and otherwise takes on
value  $\vec 0\restriction n$).

\begin{claim}  For each $L\in \mathcal L$, $B_L$ is an
infinite subset of $\bigcap \{ A_t : t\in L\}$ and\label{indf} 
for  $L\neq L'\in \mathcal L$, 
 $B_L\cap B_{L'}$ is finite.
\end{claim}

\bgroup

\def\proofname{Proof of Claim:\/}

\begin{proof}
By symmetry we may assume that $L\setminus L'\neq\emptyset$ and
let $t_L\in L\setminus L'$. Choose $n_0$ large enough so that the elements
of $\{ r_t\restriction n_0 : t \in L\cup L'\}$  are pairwise distinct and 
not equal to $\vec{0}\restriction {n_0}$. 
For all $n>n_0$ and $d\in B_{L'}$, $d(r_{t_L}\restriction n) =
\vec{0}\restriction n$  
and, for all $d\in B_{L}$, $d(r_{t_L}\restriction n)=
r_{t_L}\restriction 
n\neq \vec{0}\restriction n$. 
\end{proof}

It follows from Claim \ref{indf} that the family $\{ A_t :
t\in T_0\cap 2^{<\omega_2}\}$ 
is an independent family. 
Let $\mathcal I$ be the ideal of
subsets of $D$ that are almost disjoint from every such
$B_L$, i.e. $\mathcal I$ is often denoted as
$\left(\{ B_L : L \in \mathcal L\}\right)^\perp$.
\medskip

Let $\mathcal B$ denote the Boolean
subalgebra of $\mathcal P(D)$ generated by the family
$$ [D]^{<\aleph_0}\cup 
\{ A_t : t\in T_0\cap 2^{<\omega_2}\} \cup \{ B_L : 
L\in \mathcal L\}
\cup \mathcal I
$$
and we will choose, for each $t\in T$, a (free) ultrafilter
 $\mathcal F_t$
 on
$\mathcal B$. 

For each $t\in T$, we will first make
an assignment $ H_t$
consisting of a non-empty set of
 nodes in $T$ that are less or equal to $t$,
and
this assignment will determine the ultrafilter $\mathcal F_t$
as described below.

\medskip

We begin with the assignment $\langle H_t : t\in T\rangle$.

\begin{enumerate}
 \item 
   For $t\in T_0$ with $o(t)\in S_1\cup \{\omega_2\}$,  $H_t =
   \{ t\restriction\alpha : \alpha\in o(t)\}$.
\item 
 For each $t\in T_0$ such that $o(t)=\mu+\omega_1$
 for some  $\mu\in S_1$,
ensure that
$\{ H_{t\restriction\xi} : \mu< \xi <\mu+\omega_1\}$ is a one-to-one
  listing
  of all the finite subsets of 
$\{ t\restriction \alpha : \alpha <\mu+\omega_1\}$ that have at
  least one element strictly extending  $t\restriction\mu$
   (and satisfy the requirement that $t\restriction\xi\in
  H_{t\restriction\alpha} $ implies $\xi\leq\alpha$).
\item 
 For each $t\in T_0$ such that $o(t)=\mu+\omega_1$
 for some  $\mu\in \{0\}\cup \overline{S^2_1}\setminus S_1$,
ensure that
  $\{ H_{t\restriction\xi} : \mu\leq \xi < \mu+\omega_1\}$ is a one-to-one
  listing
  of all the finite subsets of 
$\{ t\restriction \alpha : \alpha <\mu+\omega_1\}$ that have at
  least one element  above $t\restriction\mu$
   (and satisfy the requirement that $t\restriction\xi\in
  H_{t\restriction\alpha} $ implies $\xi\leq\alpha$).
\item 
  For each $t\in T_0$ with $o(t)\in S_0$, ensure that\\
$\{ H_{t^\frown \sigma} : \sigma\in 2^{\leq\omega}\ \mbox{and}\
  t^\frown \sigma\notin T_0\}$ is a one-to-one listing of
  all  countable cofinal subsets of 
$\{ t\restriction \alpha : \alpha< o(t)\}$.
 Note that, in the previous item,  $H_{t^\frown
    \sigma}$ has already been 
  defined as a finite set if 
    $ t^\frown \sigma\in T_0$.
    \end{enumerate}

\begin{claim}
  If $t\neq s$ and $t,s\in T$, then $H_t\neq H_s$.
\end{claim}

 \begin{proof}
   It is clear that $H_t\neq H_s$ if either of them is
   uncountable. Similarly, if they are countably infinite,
   then it follows from clause (4) that $\delta_t=\delta_s$,
   and thus that $H_t\neq H_s$ since the assignment
   in (4) was chosen to be one-to-one.  Finally
   we suppose that $H_t$ and $H_s$ are finite. 
We may
   choose minimal $\mu_t$ so that $\mu_t\leq o(t)\leq \mu_t+\omega_1$.
   Similarly choose minimal $\mu_s$ so that
   $\mu_s\leq o(s)\leq \mu_s+\omega_1$.
   It follows  by the minimality, that $\mu_t$ and $\mu_s$
   are limit points of $S^2_1$. 
   It follows easily from clauses (2) and (3)
   that if $H_t=H_s$, then $\mu_t=\mu_s$.

   If $\mu_t\in S_1$  clause (2)
   ensures that if
   $t\restriction\mu_t+1 =s\restriction \mu_t+1$
   then $H_s,H_t$  are distint elements
   of $\{ H_{\bar t\restriction \xi} : \mu_t <\xi < \mu_t+\omega_1\}$
     for some $\bar t\in 2^{\mu_t+\omega_1}$.
     Otherwise
     $t\restriction\mu_t+1 \neq s\restriction \mu_t+1$
     and clause (2) ensures that
      $H_t$ and $H_s$ contain incomparable elements.
     Finally, if $\mu_t \notin S_1$, then 
$s$ and $t$ are distinct but comparable
     and
again we have
     that $H_t$ and $H_s$ are
     distinct elements of
     the list
 $\{ H_{\bar t\restriction \xi} : \mu_t \leq \xi < \mu_t+\omega_1\}$
     for some $\bar t\in 2^{\mu_t+\omega_1}$.     
   \end{proof}

\medskip

\noindent Now we make the simple assignment of the
family $\langle \mathcal F_t : t\in T\rangle$:

\begin{enumerate}
  \setcounter{enumi}{4}
\item If $H_t$ is  finite, then define
$\mathcal F_t$ to be the unique
  free ultrafilter of $\mathcal B$ with $B_{H_t}\in\mathcal F_t$.
\item If $H_t$ is infinite, then $\mathcal F_t$ is the ultrafilter
  satisfying
  \begin{enumerate}
  \item $A_s\in \mathcal F_t$ if and only if $s\in H_t$,
  \item $B_L\notin\mathcal F_t$ for all $L\in\mathcal L$,
  \item $I\notin\mathcal F_t$ for all $I\in \mathcal I$,
  \item $[D]^{<\aleph_0}\cap \mathcal F_t$ is empty.
  \end{enumerate}
\end{enumerate}

 It should be clear that the subspace $D\cup X_1$ is simply
 equal to the traditional Mrowka-Isbell type space constructed
 from the almost disjoint family $\{ B_L : L\in\mathcal L\}$.
 Let us also note that for each $I\in\mathcal I$,
 $I$ is closed in $X$, and this implies this next claim.
 
  \begin{claim} If $Y\subset D$ and\label{11}
 $\overline{Y}\cap X_1$ is empty,
    then $Y$ is closed.
\end{claim}

  \begin{proof} If no point of $X_1$ is in the closure of
    $Y$, then $Y\cap B_L$ is finite for all $L\in\mathcal L$.
    Thus $Y\in \mathcal I$.
  \end{proof}

It will be convenient to let $K_t = \{A_s : s\in
 (T_0\cap2^{<\omega_2})\setminus H_t\}$, i.e.
$K_t= \{ A_s : A_s\notin \mathcal F_t\}$. 
For disjoint
finite subsets
 $H, K $ of $T_0\cap 2^{<\omega_2}$,
 let 
$[H;K] $
 denote the clopen subset of $X$
 corresponding to the closure of this element  of $\mathcal B$:
 $$[H;K] = \bigcap\{A_t: t\in H\} \setminus\left(
 \bigcup\{A_s : s\in K\}\cup \bigcup\{B_L : \emptyset\neq L\subset H\}
\right)~.$$
 For any $t\in X_2\cup X_3$,
the family $\{ [H;K] : H\in [H_t]^{<\aleph_0}, K\in
[K_t]^{<\aleph_0}\}$ is easily seen to be a local filter base in
the subspace $X\setminus D$.
 Let $X_0 = D$, $X_1 = \{ t\in T_0 : |H_t|<\aleph_0 \}$,
 $X_2 = \{ t\in T : |H_t|=\aleph_0\}$
 and let
 $X_3=\{t\in T: |H_t|>\aleph_0\}$.

 \medskip

 \begin{claim} $X_3$ is a closed radial\label{x3closed}
 subspace of $X$.
 \end{claim}

 \begin{proof} We first show that $X_3$ is closed. Clearly
   $X_0\cup X_1$ is open so consider any $t\in X_2$.
   Since $H_t$ is a countable cofinal subset of the uncountable
   ordinal $\delta_t$, there are $\beta<\alpha<\delta_t$
   such that $t\restriction \beta\in K_t$ and $t\restriction \alpha
   \in H_t$. Note that
   $[\{t\restriction\alpha\};\{t\restriction\beta\}]$
   is disjoint from $X_3$ since $H_s$ is a downward closed subset
   of $T_0$.

   Now consider any $Y\subset X_3 $ and assume that
   $t\in X_3$ is a limit point of $Y$. We will assume
   that $o(t)<\omega_2$ and leave the case when $o(t)=\omega_2$
   to the reader. 
Choose any strictly
   increasing cofinal 
   sequence $\{ \alpha_\xi : \xi \in\omega_1\}\subset o(t)$.
   Note that $t\in K_t$. For each
   $\xi<\omega_1$, choose $y_\xi\in Y\cap
   [ \{t\restriction \alpha_\xi\}; \{ t\}]$.
   Consider any finite $H\subset H_t$ and finite $K\subset K_t$.
   Let $\tilde K = \{ s\in K_t : t\not\leq s\}$ and choose
   $\xi <\omega_1$ so that $t\restriction\alpha_\xi\not \leq s$ for
   all $s\in \tilde K$. It follows easily
   that $y_\eta\in[H;K]$ for all $\xi<\eta <\omega_1$
   and thereby proving that $\langle y_\xi : \xi\in\omega_1\rangle$
   converges to $t$.
 \end{proof}

 We continue the proof
 that $X$ is pseudoradial. Let us note that it follows
 from Claim \ref{11} that it suffices to proof
 that $X\setminus D$ is pseudoradial. For the remainder
 of the proof we will say that an elementary submodel $M$
is
suitable to mean that $M\prec H(\aleph_3)$ and
that $\{ T_0,\mathcal L,
\{A_t : t\in T_0\cap 2^{<\omega_2}\},
\{H_t : t\in T\}, \{B_L : L\in \mathcal L\}\}$ is an
element of $M$. Here is one of the key properties
 of the space.

 \begin{claim} If $Y\subset X\setminus D$ and\label{x2}
 $t\in X_2\cup X_3$
   is a limit point of $Y$, then for any countable
suitable  elementary submodel $M$ 
such that $Y,t\in M$,
there is a converging sequence $\{ t_n : n\in\omega\}\subset Y\cap M$ 
such that $t$ is the limit if $t\in X_2$ and, if $t\in X_3$,
the limit is $t_M\in X_2$ where
\begin{enumerate}
\item $\delta_{t_M} = \sup(M\cap o(t))$ and
  $t_M = t\restriction\delta_{t_M}{}^\frown \sigma_{t_M}$,
  \item  $H_{t_M} = \{ t\restriction \alpha : \alpha\in M\cap
    o(t)\}$.
\end{enumerate}
 \end{claim}

 \begin{proof} 
   Let $\bar t$ equal to $t$ if $t\in X_2$ and let
   $\bar t= t_M$ if $t\in X_3$. In either case, we have
   that $H_{\bar t}\subset M$. If $t\in X_3$, then observe
   that $t\in K_t$ and
$K_t\cap \{ t\restriction\alpha : \alpha\in M\cap o(t)\}$ is
   empty. If $t\in X_2$, then, by elementarity,
 $K_t \cap
   \{ t\restriction\alpha : \alpha\in M\cap o(t)\}$ is non-empty.
   Define $K_0\in M$ to be $\{t\}$ if $t\in X_3$
   and to be any finite subset of
$K_t \cap
   \{ t\restriction\alpha : \alpha\in M\cap o(t)\}$
   if $t\in X_2$. Choose $H_0$ to be any finite subset
   of $H_{\bar t}$ so that, if $t\in X_2$,
   then there are $\alpha<\beta$ so that
   $t\restriction \alpha\in K_0$
   and $t\restriction \beta \in H_0$.
 Since $[H_0;K_0]\in M$, there
   is a $y_0\in M\cap Y\cap [H_0;K_0]$.

Choose any sequence  $\{ [H_n;K_n] : n\in \omega\}\subset M$
so that:
 \begin{enumerate}
\item  $\{ H_n : n\in \omega\}$ is a strictly increasing sequence of 
 finite sets whose union is $H_{\bar t}$,
\item  $\{ K_n : n\in  \omega\}$
  is a strictly increasing sequence of finite sets whose
union equals        $(K_{\bar t}\setminus H_t)\cap M$,
 \end{enumerate}

We show that we can choose $y_n\in Y\cap M\cap [H_n;K_n]$ for all
$n$. If $t=\bar t$, then $t  \in [H_n;K_n]$ and
so $Y\cap M\cap [H_n;K_n]$ is not empty by elementarity and the
assumption that $t$ is a limit point of $Y$.
If $t\neq\bar t$, then $H_n\subset H_{\bar t}\subset H_t$
and $ K_n \subset K_{\bar t}\setminus H_t$.
Therefore, we again  have that $t\in [H_n;K_n]$ 
and so, by elementarity,
there is a $y_n\in Y\cap M\cap [H_n;K_n]$.

Now we show that the sequence $\{y_n : n\in\omega\}$ converges to
$\bar t$.  Consider any finite $H\subset H_{\bar t}$ and finite
 $K\subset K_{\bar t}$. Choose $\bar\beta<\delta_{\bar t}$ large enough
 so that $o(s\wedge (t\restriction\delta_{\bar t}) )<\bar \beta$
 for all $s\in K$ such that $t\restriction \delta_{\bar t}\not\leq s$.
 We may assume that $t\restriction \bar\beta\in H_{\bar t}$.
Choose $n$ so that $H\cup \{t\restriction\bar\beta\}\subset H_n$, 
 and $(K\setminus H_t)\cap M$
 is a subset of $K_n$. We show that $y_m\in [H;K]$ for
 each $m>n$. Since $H\subset H_m$ and $y_m\in [H_m;K_m]$
 it is clear that $H\subset H_{y_m}$ and $K_m\subset K_{y_m}$.
 Also, $H_{y_m}\setminus H_m$ is not empty (in the case
 that $H_{y_m}$ is finite). To show that  $y_m\in [H;K]$,
 we must simply show that
 $H_{y_m}\cap K$ is empty.

 In the case that $t=\bar t$, then $K_{\bar t}\setminus H_t =
 K_{\bar t}$, and so $H_{y_m}\cap K \subset H_{y_m}\cap K_m
 =\emptyset$. So now we assume that $\bar t \neq t\in X_3$
 and recall that $t\in K_0$. 
 In this case, $K_{\bar t}\cap M$ is also disjoint from
  $H_t$, hence $K\cap M\subset K_n$.
If $y_m\notin X_3$,
 then $H_{y_m}\subset M$ and so $H_{y_m}\cap K \subset H_{y_m}\cap
 K\cap M \subset H_{y_m}\cap K_m =\emptyset$.
Now suppose that $y_m\in X_3$ and assume that $s\in H_{y_m}\cap K$.
Note that  we have that 
 $t\restriction \bar\beta \in H_{y_m}$ and so $t\restriction\delta_{\bar
  t}\leq s \leq y_m$. 
Also, 
 $\{ t\restriction \alpha : \alpha
<\delta_{\bar t}\} $ is a subset of $H_{y_m}$. 
Since $t\in K_0$ implies that $o(t\wedge y_m)\in
M\cap o(t) =\delta_{\bar t}$,
this 
contradicts  that $t\restriction
 \delta_{\bar t}\leq s$.
\end{proof}

 \begin{claim} If $Y\subset X\setminus D$ and $t\in X_3$  is a limit point
   of\label{x3} $Y$, then there is a  sequence
   $\{ y_\alpha : \alpha\in\omega_1\}$ converging to a point in $X_3$
   where each $y_\alpha$ is in  the sequential closure of $Y$.
 \end{claim}

 Before we prove this Claim, we observe that this completes
 the proof that $X$ is pseudoradial. If $Y$ is a
 subset of $X\setminus D$ then, by Claim \ref{x2} every
 limit point in $X_2$ is in the sequential closure, and
 by Claim \ref{x3}, the points  of $X_3$
in the radial closure
of $Y$ are dense in $\overline{Y}\cap X_3$. Since $X_3$
is radial (Claim \ref{x3closed}), this implies that the radial closure
of $Y$ is  closed.

\begin{proof}
 Fix any increasing sequence $\{ M_\alpha : \alpha\in\omega_1\}$ of
suitable countable elementary submodels  satisfying
that $Y,t$ are elements of $M_0$. 
There is an ordinal $\theta\in S_1$ such that
$\theta = \bigcup\{M_\alpha\cap \omega_2 : \alpha\in\omega_1\}$
 (see \cite{Kunen}).
 If $o(t)<\omega_2$, let $\lambda = o(t)\in S_1$. 
If $o(t)=\omega_2$,
  let $\lambda = \theta $.

   For each $\alpha\in\omega_1$,
   let $\delta_\alpha = \sup(M_\alpha\cap \lambda)$.
   Following Claim \ref{x2}, let
   $y_\alpha  = t\restriction \delta_\alpha{}^\frown \sigma_\alpha$
   where $\sigma_\alpha$ is chosen so that 
 $H_{y_\alpha} =\{ t\restriction\gamma : \gamma\in  M_\alpha\cap \lambda\}$.
   It follows from Claim \ref{x2}, that $y_\alpha$ is in the
   sequential closure of $Y$.

   Now we simply check that $\{y_\alpha : \alpha\in\omega_1\}$ converges
   to $t\restriction\lambda$. Fix any basic open
   set $[H;K]$ of $t\restriction\lambda$.
Choose
any $\beta < \omega_1$ so that $H\in M_\beta$.
Observe that $H\subset H_{y_\alpha}$ for all
$\beta<\alpha\in\omega_1$.
Similarly, $H_{y_\alpha}\subset H_{t\restriction\delta_\alpha}\subset
H_{t\restriction\lambda}$ for all $\alpha\in\omega_1$.
Therefore $K\cap H_{y_\alpha}=\emptyset$, and
so $y_\alpha\in[H;K]$,
  for all $\beta<\alpha\in\omega_1$.
This completes the proof of the Claim.
\end{proof}
\egroup

This completes the proof of the Theorem in the case
that $2^{\aleph_1}=\aleph_2$. The only modification that
is needed for the case $2^{\aleph_1}>\aleph_2$ is
to pass to the subtree $T_0\cap 2^{\leq\theta}$
where $\theta\in S_1$ is the minimum level of $T_0$
which has cardinality greater than $\aleph_2$.
This ensures that $T_0\cap 2^{<\theta}$ has
cardinality $\aleph_2$.  Establish
the new enumeration  $\{ r_t : t\in T_0\cap 2^{<\theta}\}$
of $ 2^{\omega}\setminus\{0\}$ and  the proof proceeds exactly as above.
\end{proof}

\begin{question} Is it consistent that there is no separable
  pseudoradial space of cardinality $2^{\mathfrak c}$?
\end{question}

\begin{question} Do separable regular pseudoradial spaces of
cardinality greater than $\mathfrak c$ exist? 
\end{question}

 We also do not know if large separable Hausdorff pseudoradial spaces
 of cardinality greater than $\mathfrak c$ exist.

\begin{bibdiv}

\def\cprime{$'$} 

\begin{biblist}

\bib{Arhangelskii}{article}{
   author={Arhangel\cprime ski\u{\i}, A. V.},
   title={The structure and classification of topological spaces and
   cardinal invariants},
   language={Russian},
   journal={Uspekhi Mat. Nauk},
   volume={33},
   date={1978},
   number={6(204)},
   pages={29--84, 272},
   issn={0042-1316},
   review={\MR{526012}},
}

\bib{ArhBella93}{article}{
   author={Arhangel\cprime skii, A. V.},
   author={Bella, A.},
   title={On the cardinality of a pseudo-radial compact space},
   language={English, with Italian summary},
   journal={Boll. Un. Mat. Ital. A (7)},
   volume={7},
   date={1993},
   number={2},
   pages={237--241},
   review={\MR{1234074}},
}

\bib{Balogh}{article}{
   author={Balogh, Zolt\'{a}n T.},
   title={On compact Hausdorff spaces of countable tightness},
   journal={Proc. Amer. Math. Soc.},
   volume={105},
   date={1989},
   number={3},
   pages={755--764},
   issn={0002-9939},
   review={\MR{930252}},
   doi={10.2307/2046929},
}

\bib{BaumWeese}{article}{
   author={Baumgartner, James E.},
   author={Weese, Martin},
   title={Partition algebras for almost-disjoint families},
   journal={Trans. Amer. Math. Soc.},
   volume={274},
   date={1982},
   number={2},
   pages={619--630},
   issn={0002-9947},
   review={\MR{675070}},
   doi={10.2307/1999123},
}

\bib{Bella86}{article}{
   author={Bella, Angelo},
   title={Free sequences in pseudoradial spaces},
   journal={Comment. Math. Univ. Carolin.},
   volume={27},
   date={1986},
   number={1},
   pages={163--170},
   issn={0010-2628},
   review={\MR{843428}},
}

\bib{vD}{article}{
   author={van Douwen, Eric K.},
   title={The integers and topology},
   conference={
      title={Handbook of set-theoretic topology},
   },
   book={
      publisher={North-Holland, Amsterdam},
   },
   date={1984},
   pages={111--167},
   review={\MR{776622}},
}

\bib{Dowlarge}{article}{
   author={Dow, Alan},
   title={Large compact separable spaces may all contain $\beta {\bf N}$},
   journal={Proc. Amer. Math. Soc.},
   volume={109},
   date={1990},
   number={1},
   pages={275--279},
   issn={0002-9939},
   review={\MR{1010799}},
   doi={10.2307/2048389},
}

\bib{PrahaIII}{article}{
   author={Dow, Alan},
   title={Set-theoretic update on topology},
   conference={
      title={Recent progress in general topology. III},
   },
   book={
      publisher={Atlantis Press, Paris},
   },
   date={2014},
   pages={329--357},
   review={\MR{3205487}},
   doi={10.2991/978-94-6239-024-9\_7},
}
\bib{picharacter}{article}{
   author={Dow, Alan},
   title={Countable $\pi$-character, countable compactness and PFA},
   journal={Topology Appl.},
   volume={239},
   date={2018},
   pages={25--34},
   issn={0166-8641},
   review={\MR{3777320}},
   doi={10.1016/j.topol.2018.02.009},
}

\bib{DowSide}{article}{
   author={Dow, Alan},
   title={Generalized side-conditions and Moore-Mr\'{o}wka},
   journal={Topology Appl.},
   volume={197},
   date={2016},
   pages={75--101},
   issn={0166-8641},
   review={\MR{3426909}},
   doi={10.1016/j.topol.2015.10.016},
}
 		
\bib{DowFeng}{article}{
   author={Dow, Alan},
   author={Feng, Ziqin},
   title={Compact spaces with a P-base},
   journal={preprint},
   date={2020},
%   pages={226--238},
%   issn={0166-8641},
%   review={\MR{3414886}},
%   doi={10.1016/j.topol.2015.09.025},
}	

\bib{DowTodd}{article}{
   author={Dow, Alan},
   author={Eisworth, Todd},
   title={CH and the Moore-Mrowka problem},
   journal={Topology Appl.},
   volume={195},
   date={2015},
   pages={226--238},
   issn={0166-8641},
   review={\MR{3414886}},
   doi={10.1016/j.topol.2015.09.025},
}	

\bib{Todd}{article}{
   author={Eisworth, Todd},
   title={Countable compactness, hereditary $\pi$-character, and the
   continuum hypothesis},
   journal={Topology Appl.},
   volume={153},
   date={2006},
   number={18},
   pages={3572--3597},
   issn={0166-8641},
   review={\MR{2270606}},
   doi={10.1016/j.topol.2006.03.021},
}

\bib{Jech}{book}{
   author={Jech, Thomas},
   title={Set theory},
   series={Perspectives in Mathematical Logic},
   edition={2},
   publisher={Springer-Verlag, Berlin},
   date={1997},
   pages={xiv+634},
   isbn={3-540-63048-1},
   review={\MR{1492987}},
   doi={10.1007/978-3-662-22400-7},
}
	 	
\bib{JKL}{article}{
   author={Juh\'{a}sz, Istv\'{a}n},
   author={Koszmider, Piotr},
   author={Soukup, Lajos},
   title={A first countable, initially $\omega_1$-compact but non-compact
   space},
   journal={Topology Appl.},
   volume={156},
   date={2009},
   number={10},
   pages={1863--1879},
   issn={0166-8641},
   review={\MR{2519221}},
   doi={10.1016/j.topol.2009.04.004},
}

\bib{JSpseudo}{article}{
   author={Juh\'{a}sz, I.},
   author={Szentmikl\'{o}ssy, Z.},
   title={Sequential compactness versus pseudo-radiality in compact spaces},
   journal={Topology Appl.},
   volume={50},
   date={1993},
   number={1},
   pages={47--53},
   issn={0166-8641},
   review={\MR{1217695}},
   doi={10.1016/0166-8641(93)90071-K},
}

		\bib{Kunen}{book}{
   author={Kunen, Kenneth},
   title={Set theory},
   series={Studies in Logic and the Foundations of Mathematics},
   volume={102},
   note={An introduction to independence proofs},
   publisher={North-Holland Publishing Co., Amsterdam-New York},
   date={1980},
   pages={xvi+313},
   isbn={0-444-85401-0},
   review={\MR{597342}},
}

\bib{Justin}{article}{
   author={Moore, Justin Tatch},
   title={Open colorings, the continuum and the second uncountable cardinal},
   journal={Proc. Amer. Math. Soc.},
   volume={130},
   date={2002},
   number={9},
   pages={2753--2759},
   issn={0002-9939},
   review={\MR{1900882}},
   doi={10.1090/S0002-9939-02-06376-1},
}

\bib{Boris1}{article}{
   author={\v{S}apirovski\u{\i}, B. \`E.},
   title={$\pi $-character and $\pi $-weight in bicompacta},
   language={Russian},
   journal={Dokl. Akad. Nauk SSSR},
   volume={223},
   date={1975},
   number={4},
   pages={799--802},
   issn={0002-3264},
   review={\MR{0410632}},
}
		
\bib{Boris}{article}{
   author={\v{S}apirovski\u{\i}, B. \`E.},
   title={Mappings on Tihonov cubes},
   language={Russian},
   note={International Topology Conference (Moscow State Univ., Moscow,
   1979)},
   journal={Uspekhi Mat. Nauk},
   volume={35},
   date={1980},
   number={3(213)},
   pages={122--130},
   issn={0042-1316},
   review={\MR{580628}},
}
 
\bib{SapPseudo}{article}{
   author={Shapirovski\u{\i}, Boris},
   title={The equivalence of sequential compactness and pseudoradialness in
   the class of compact $T_2$-spaces, assuming CH},
   conference={
      title={Papers on general topology and applications},
      address={Madison, WI},
      date={1991},
   },
   book={
      series={Ann. New York Acad. Sci.},
      volume={704},
      publisher={New York Acad. Sci., New York},
   },
   date={1993},
   pages={322--327},
   review={\MR{1277868}},
   doi={10.1111/j.1749-6632.1993.tb52534.x},
}

\bib{Petr}{article}{
   author={Simon, P.},
   title={Applications of independent linked families},
   conference={
      title={Topology, theory and applications},
      address={Eger},
      date={1983},
   },
   book={
      series={Colloq. Math. Soc. J\'{a}nos Bolyai},
      volume={41},
      publisher={North-Holland, Amsterdam},
   },
   date={1985},
   pages={561--580},
   review={\MR{863940}},
}

\bib{Boban}{article}{
   author={Veli\v{c}kovi\'{c}, Boban},
   title={Forcing axioms and stationary sets},
   journal={Adv. Math.},
   volume={94},
   date={1992},
   number={2},
   pages={256--284},
   issn={0001-8708},
   review={\MR{1174395}},
   doi={10.1016/0001-8708(92)90038-M},
}

\end{biblist}
\end{bibdiv}

\end{document}